\documentclass[10pt]{amsart}
\usepackage{amsfonts,amsthm,amsmath,color,graphics,hyperref}
\usepackage[all]{xy}
\usepackage{indentfirst}
\setlength{\parindent}{3ex}

\newtheorem{theorem}{Theorem}[section]
\newtheorem{corollary}[theorem]{Corollary}
\newtheorem{definition}[theorem]{Definition}
\newtheorem{lemma}[theorem]{Lemma}
\newtheorem{proposition}[theorem]{Proposition}
\newtheorem{remark}[theorem]{Remark}

\newtheorem{example}[theorem]{Example}
\newtheorem{assumption}[theorem]{Assumption}

\begin{document}
\title{Weierstrass filtration on Teichm\"{u}ller curves and Lyapunov exponents: Upper bounds}
\author{Fei Yu}
\author{Kang Zuo}
\address{School of Mathematical Sciences, Zhejiang University, China}
\address{School of Mathematical Sciences, Xiamen University, China}
\email{yufei@zju.edu.cn}
\address{Universit{\"a}t Mainz, Fachbereich 17, Mathematik, 55099 Mainz, Germany}
\email{zuok@uni-mainz.de}
\date{\today. \\The research of F.Yu was supported by is supported by the Fundamental Research Funds for the Central Universities (No.
20720140526).  The work of K.Zuo is funded by the SFB/TR 45 ¡®Periods, Moduli Spaces and Arithmetic of Algebraic
Varieties¡¯ of the DFG}

\maketitle
\begin{abstract}We get an
upper bound of the slope of each graded quotient for the
Harder-Narasimhan filtration of the Hodge bundle of a
Teichm\"{u}ller curve. As an application, we show that the sum of
Lyapunov exponents of a Teichm\"{u}ller curve does not exceed
${(g+1)}/{2}$, with equality reached if and only if the curve lies
in the hyperelliptic locus induced from
$\mathcal{Q}(2k_1,...,2k_n,-1^{2g+2})$ or it is a special
Teichm\"{u}ller curve in $\Omega\mathcal{M}_g(1^{2g-2})$. It also gives an unified interpretation for many known results about the special partial sums of Lyapunov exponents on Teichm\"uller curves.
\end{abstract}
 \tableofcontents
\section{Introduction}
Let $\mathcal{M}_g$ be the moduli space of Riemann surfaces of genus
$g$, and $\Omega\mathcal{M}_g\rightarrow \mathcal{M}_g$ the bundle
of pairs $(X,\omega)$, where $\omega\neq 0$ is a holomorphic 1-form
on $X\in \mathcal{M}_g$. Denote by
$\Omega\mathcal{M}_g(m_1,...,m_k)$ the stratum of pairs $(X,\omega)$, where
$\omega(\neq 0)$ have $k$ distinct zeros of order $m_1,...,m_k$
respectively.

There is a nature action of $GL_2^+(\mathbb{R})$ on
$\Omega\mathcal{M}_g(m_1,...,m_k)$, whose orbits project to complex
geodesics in $\mathcal{M}_g$. The projection of an orbit is almost
always dense. However, if the stabilizer $SL(X,\omega)\subset
SL_2(\mathbb{R})$ of a given form is a lattice,   then the
projection of its orbit gives a closed, algebraic Teichm\"{u}ller
curve $C$.

After suitable base change and compacfication, we can get a
universal family $f:S\rightarrow C$, which is a relative minimal
semistable model with disjoint sections $D_1,...,D_k$; here $D_i|_X$
is a zero of $\omega$ when restrict to each fiber $X$.

The relative canonical bundle formula \eqref{canonical} of the
Teichm\"{u}ller curve is (\cite{CM12}\cite{EKZ}):
$$\omega_{S/C}\simeq f^*\mathcal{L}\otimes \mathcal{O}(\sum^{k}_{i=1} m_i
D_i).$$

Here $\mathcal{L}\subset f_*{\omega_{S/C}}$ be the   line
bundle whose fiber over the point corresponding to $X$ is
$\mathbb{C}\omega$, the generating differential of Teichm\"{u}ller
curves and
$$\mathrm{deg}(\mathcal{L})=(2g(C)-2+|\Delta|)/2.$$

There are many nature  vector subbundles of the Hodge bundle
$f_*(\omega_{S/C})$:
$$\mathcal{L}\otimes f_*\mathcal{O}(\sum^{k}_{i=1} d_i D_i)\subset
\mathcal{L}\otimes f_*\mathcal{O}(\sum^{k}_{i=1} m_i D_i)=f_*(\omega_{S/C}).$$
One can construct many filtration (We call them Weierstrass filtration in \cite{YZ} because their structures are closely related with the Weierstrass semigroup of points of general fibers) by using these subbundles. In
particular, using properties of Weierstrass semigroups, we have
constructed the Harder-Narasimhan filtration of $f_*(\omega_{S/C})$
for Teichm\"{u}ller curves in hyperelliptic loci and some low genus
nonvarying strata \cite{YZ}. In this article, we will get an upper
bound of the slope of each graded quotient for the Harder-Narasimhan
filtration of $f_*(\omega_{S/C})$ of Teichm\"{u}ller curves in each
stratum.

For a vector bundle $V$, let $\mathrm{gr}^{HN}_j$ be the $j$- graded part of Harder-Narasimhan filtration of $V$. If
$\mathrm{rk}(HN_{j-1}(V))<i\leq \mathrm{rk}(HN_j(V))$, we denote by $\mu_i(V)$ the slope $\mu(\mathrm{gr}^{HN}_j)$. Write $w_i$ for
$\mu_i(f_*(\omega_{S/C}))/\mathrm{deg}(\mathcal{L})$, then we have:
\begin{theorem}[Theorem \ref{ubhn}] For a Teichm\"{u}ller
curve which lies in $\Omega\mathcal{M}_g(m_1,...,m_k)$. We order the numbers in the set
$\{\frac{j}{m_l+1}|1\leq j\leq m_l,1\leq l\leq k\}$
increasing(counted with multiplicity) as $a_1\leq a_2\leq \cdot\cdot\cdot \leq a_{m_1+\cdot\cdot\cdot+m_k}$.

Then we have $w_1=1$. For $2 \leq i\leq g$, there are
inequalities:
$$w_i\leq 1-a_{H_i(P)},$$
here $P$ is the special permutation \eqref{arrange} which satisfies $H_i(P)\geq 2i-2$.
\end{theorem}
We will use the example \ref{ex} to explain this theorem.

Fix an $SL_2(\mathbb{R})$-invariant, ergodic measure $\mu$ on
$\Omega\mathcal{M}_g$. The Lyapunov exponents for the
Teichm\"{u}ller geodesic flow on $\Omega\mathcal{M}_g$ measure the
logarithm of the growth rate of the Hodge norm of cohomology classes
under the parallel transport along the geodesic flow.

In general,  it is difficult to compute the Lyapunov exponents.
There are some algebraic attempts to compute some special partial sums of
Lyapunov exponents, all of which are based on the following fact: in these cases,
the special partial sums of the Lyapunov exponents is related with the degree of
certain vector bundles (cf. Theorem \ref{sumly}). In particular, the
sum of Lyapunov exponents of a Teichm\"{u}ller curve equals
$\mathrm{deg}(f_*(\omega_{S/C}))/\mathrm{deg}(\mathcal{L})$. This algebraic
interpretation combined with  information about the
Harder-Narasimhan filtration gives us the following estimate:

\begin{theorem}[Theorem \ref{main}] The sum of Lyapunov exponents of a Teichm\"{u}ller curve in $\Omega\mathcal{M}_g(m_1,...,m_k)$ satisfies the inequality
$$L(C)\leq \frac{g+1}{2}.$$
Furthermore, equality occurs if and only if it lies in the
hyperelliptic locus induced from
$\mathcal{Q}(2k_1,...,2k_n,-1^{2g+2})$ or it is a special
Teichm\"{u}ller curve in $\Omega\mathcal{M}_g(1^{2g-2})$.
\end{theorem}
Dawei Chen and Martin M\"{o}ller have obtained many interesting upper
bounds in \cite{CM12}\cite{Mo13}. When $k\geq 4$, this upper bound better than the upper bound obtained by using Cornalba-Harris-Xiao's slope inequality.

The Harder-Narasimhan filtration also gives rise to an upper bound
of the degrees of any locally free subsheaf, especially those
related to the special partial sums of the Lyapunov exponents.

\begin{proposition}[Proposition \ref{pa}]If the VHS over the Teichm\"uller curve $C$ contains a sub-VHS $\mathbb{W}$
of rank $2k$, then the sum of the $k$ corresponding non-negative
Lyapunov exponents is the sum of  $w_{i_1},...,w_{i_k}$ (where $i_j$
are different to each other) and satisfies
$$\overset{k}{\underset{i=1}{\sum}}\lambda^{\mathbb{W}}_i\leq 1+\sum^k_{i=2}(1-a_{H_i(P)}).$$
\end{proposition}

For individual Lyapunov exponents, due to the lack of algebraic
interpretation\footnote{Base on the result of this paper, the first author make a conjecture that partial sums of Lyapunov exponents might be estimated through Chern classes
of holomorphic vector bundles over Teichm\"uller curves normalized by the Euler
characteristics of Teichm\"uller curves \cite{Yu}. Recently a proof of this conjecture is announced by Eskin-Kontsevich-M\"oller-Zorich.}, we will make the following assumption:

\begin{assumption}[Assumption \ref{assumption}]$f_*(\omega_{S/C})$ equals $(\overset{h}{\underset{i=1}{\oplus}} L_i) \oplus W$, here $L_i$ are line bundles
such that the $i$-th Lyapunov exponent satisfies the equality:
$$\lambda_i=\Big\{
 \begin{array}{cc}
\mathrm{deg}(L_i)/\mathrm{deg}(\mathcal{L}) &1\leq i\leq h \\
0 &h< i\leq g.
   \end{array}
$$
\end{assumption}
There are many examples satisfying this assumption: triangle groups
\cite{BM10}\cite{Wr2}, square tiled cyclic covers \cite{EKZ11}\cite{FMZ},
square tiled abelian covers \cite{Wr1}, some wind-tree models
\cite{DHL}, and algebraic primitives\cite{BM09}.

Our estimate on the slopes of the Harder-Narasimhan filtration will
give the following upper bound for individual Lyapunov exponents:

\begin{proposition}[Proposition \ref{single}] For a Teichm\"{u}ller curve which
satisfies the assumption \ref{assumption} and lies in
$\Omega\mathcal{M}_g(m_1,...,m_k)$. We order the numbers in the set
$\{\frac{j}{m_l+1}|1\leq j\leq m_l,1\leq l\leq k\}$
increasing(counted with multiplicity) as $a_1\leq a_2\leq \cdot\cdot\cdot \leq a_{m_1+\cdot\cdot\cdot+m_k}$.

For $2 \leq i\leq g$, the $i$-th Lyapunov exponent
satisfies the inequality:
$$\lambda_i \leq 1-a_{H_i(P)},$$
here $P$ is the special permutation \eqref{arrange} which satisfies $H_i(P)\geq 2i-2$.
\end{proposition}
The equality can be reached for an algebraic primitive
Teichm\"{u}ller curve lying in the hyperelliptic locus induced from
$\mathcal{Q}(2k_1,...,2k_n,-1^{2g+2})$.

 For Teichm\"{u}ller curves lying in hyperelliptic loci
and some low genus nonvarying strata, the following proposition is
obvious because we have constructed the Harder-Narasimhan filtration
in \cite{YZ}.

\begin{proposition}[Proposition \ref{nonva}] For a Teichm\"{u}ller curve which
satisfies the assumption \ref{assumption} and lies in hyperelliptic
loci or one of the following strata:\\
$\overline{\Omega\mathcal{M}}_3(4),\overline{\Omega\mathcal{M}}_3(3,1),\overline{\Omega\mathcal{M}}^{odd}_3(2,2),\overline{\Omega\mathcal{M}}_3(2,1,1)$\\
$\overline{\Omega\mathcal{M}}_4(6),\overline{\Omega\mathcal{M}}_4(5,1),\overline{\Omega\mathcal{M}}^{odd}_4(4,2),\overline{\Omega\mathcal{M}}^{non-hyp}_4(3,3),\overline{\Omega\mathcal{M}}^{odd}_4(2,2,2),\overline{\Omega\mathcal{M}}_4(3,2,1)$\\
$\overline{\Omega\mathcal{M}}_5(8),\overline{\Omega\mathcal{M}}_5(5,3),\overline{\Omega\mathcal{M}}^{odd}_5(6,2)$\\
 The $i$-th Lyapunov exponent $\lambda_i$ equals the $w_i$ which is computed in the Theorem \ref{YZ}.
\end{proposition}
\paragraph{\textbf{Acknowledgement}}We thank Ke Chen for a his many suggestions.
\section{Harder-Narasimhan
filtration}

The readers are referred to \cite{HL} for details about sheaves on
algebraic varieties. Let $C$ be a smooth projective curve, $V$ a
vector bundle over $C$ of slope $\mu(V):=\frac{\mathrm{deg}(V)}{\mathrm{rk}(V)}$. We
call $V$ semistable (resp.stable) if $\mu(W)\leq\mu(V)$
(resp.$\mu(W)<\mu(V)$) for any subbundle $W\subset V$. If $V_1,V_2$
are semistable such that $\mu(V_1)>\mu(V_2)$, then any map
$V_1\rightarrow V_2$ is zero.

A Harder-Narasimhan filtration for $V$ is an increasing filtration:
$$0=HN_0(V)\subset HN_1(V)\subset \cdot\cdot\cdot\subset HN_k(V)$$
such that the graded quotients $\mathrm{gr}^{HN}_i=HN_i(V)/HN_{i-1}(V)$ for
$i=1,...,k$ are semistable vector bundles and
$$\mu(\mathrm{gr}^{HN}_1)>\mu(\mathrm{gr}^{HN}_2)>\cdot\cdot\cdot>\mu(\mathrm{gr}^{HN}_k)$$
The Harder-Narasimhan filtration is unique.

A Jordan-H\"{o}lder filtration for a semistable vector bundle $V$ is a
filtration:
$$0=V_0\subset V_1\subset \cdot\cdot\cdot\subset V_l=V$$
such that the graded quotients $\mathrm{gr}^{V}_i=V_i/V_{i-1}$ are stable of
the same slope.

A Jordan-H\"{o}lder filtration always exist, but it is not unique in
general. The graded objects $\mathrm{gr}^{V}=\oplus \mathrm{gr}^{V}_i$ of a semistable vector bundle $V$ do  not
depend on the choice of the Jordan-H\"{o}lder filtration.

For a rank $r$ vector bundle $V$,  If
$\mathrm{rk}(HN_{j-1}(V))<i\leq \mathrm{rk}(HN_j(V))$, we denote by $\mu_i(V)$ the slope $\mu(\mathrm{gr}^{HN}_j)$ . Obviously we have $\mu_1(V)\geq
\cdot\cdot\cdot\geq \mu_r(V)$.

\begin{lemma}\label{control} Let $V$ and $U$ be two vector bundles of rank $n$ over $C$, with increasing filtration
$$0=V_0\subset V_1\subset \cdot\cdot\cdot\subset V_n=V$$
$$0=U_0\subset U_1\subset \cdot\cdot\cdot\subset U_n=U$$
such  that $V_i/V_{i-1},U_i/U_{i-1}$ are line bundles,
$V_i/V_{i-1}\subset U_i/U_{i-1}$ and the degrees $\mathrm{deg}(U_i/U_{i-1})$
decrease in $i$ ($1\leq i\leq n$). Then $\mu_i(V)\leq
\mathrm{deg}(U_i/U_{i-1})$.
\end{lemma}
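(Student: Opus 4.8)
The plan is to establish, for each fixed $i$, the pointwise bound $\mu_i(V)\le deg(U_i/U_{i-1})$ by passing through the quotient bundle $V/V_{i-1}$; more precisely, I would prove the chain
$$\mu_i(V)\ \le\ \mu_{\max}(V/V_{i-1})\ \le\ deg(U_i/U_{i-1}),$$
where $\mu_{\max}$ denotes the maximal slope of a nonzero subsheaf (equivalently, of a subbundle). A preliminary remark: each $V_j$ is genuinely a subbundle of $V$, since an iterated extension of line bundles on a smooth curve is torsion-free, hence locally free, so $V/V_j$ is locally free; the same applies to $U$.

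For the first inequality I would choose the index $j$ with $rk(HN_{j-1}(V))<i\le rk(HN_j(V))$, so that by definition $\mu_i(V)=\mu(gr^{HN}_j)$, and the latter equals $\mu_{\min}(HN_j(V))$ because $HN_1(V)\subset\cdots\subset HN_j(V)$ is already the Harder--Narasimhan filtration of $HN_j(V)$ itself. Since $rk(HN_j(V))\ge i>rk(V_{i-1})$, the composite $HN_j(V)\hookrightarrow V\twoheadrightarrow V/V_{i-1}$ has nonzero image $Q$, which is a vector bundle (a subsheaf of the locally free $V/V_{i-1}$). Being a quotient of $HN_j(V)$, the bundle $Q$ satisfies $\mu(Q)\ge \mu_{\min}(HN_j(V))=\mu_i(V)$; this is the standard fact that a nonzero quotient bundle of $E$ has slope at least $\mu_{\min}(E)$, provable by peeling off the bottom Harder--Narasimhan step of $E$ (a semistable bundle, each quotient of which has slope $\ge \mu_{\min}(E)$) and inducting on the length. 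As $\mu_{\max}(V/V_{i-1})$ is the supremum of slopes of nonzero subsheaves, this yields $\mu_{\max}(V/V_{i-1})\ge \mu(Q)\ge \mu_i(V)$.

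For the second inequality, the given filtration of $V$ induces on $V/V_{i-1}$ the filtration $0\subset V_i/V_{i-1}\subset\cdots\subset V_n/V_{i-1}$, whose successive quotients are the line bundles $V_j/V_{j-1}$ for $j=i,\dots,n$. By hypothesis $deg(V_j/V_{j-1})\le deg(U_j/U_{j-1})$, and since $deg(U_j/U_{j-1})$ is non-increasing in $j$, each of these line bundles has degree at most $deg(U_i/U_{i-1})$. I would then invoke the elementary observation that a bundle carrying a filtration by subbundles with line-bundle graded quotients all of degree $\le d$ has $\mu_{\max}\le d$: any sub-line-bundle either maps nontrivially onto the topmost quotient, forcing its degree to be $\le d$, or it is contained in the previous filtration step, whereupon one inducts on the rank. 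With $d=deg(U_i/U_{i-1})$ this gives $\mu_{\max}(V/V_{i-1})\le deg(U_i/U_{i-1})$, and combining the two displayed inequalities proves the lemma.

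The only step with real content is the first inequality; it rests on the rank comparison $rk(HN_j(V))>rk(V_{i-1})$, which forces the map to the quotient to be nonzero, together with the identification $\mu_i(V)=\mu_{\min}(HN_j(V))$. I expect the point to be careful about is the direction of the slope inequalities for sub- and quotient-sheaves of (semi)stable bundles, and the passage to saturations. Note finally that neither the bundle structure of $U$ nor the inclusions $V_i/V_{i-1}\subset U_i/U_{i-1}$ are used beyond the non-increasing degree sequence $deg(U_i/U_{i-1})$ they produce.
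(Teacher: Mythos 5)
Your overall route is sound and is, in substance, the paper's argument turned from a contradiction into a direct chain: the paper assumes $\mu_i(V)>\deg(U_i/U_{i-1})$, uses Hom-vanishing between semistable bundles of decreasing slope (inducting over the graded pieces $gr^{HN}_m$ and over the quotients $V_l/V_{l-1}$, $l\ge i$) to show $HN_j(V)\to V/V_{i-1}$ is zero, and contradicts the rank count $\mathrm{rk}(HN_j(V))\ge i>\mathrm{rk}(V_{i-1})$; you prove the same two facts positively, as $\mu_i(V)=\mu_{\min}(HN_j(V))\le \mu(Q)\le\mu_{\max}(V/V_{i-1})$ (the nonzero image $Q$ being forced by the same rank count) and $\mu_{\max}(V/V_{i-1})\le\deg(U_i/U_{i-1})$ (using the induced filtration of $V/V_{i-1}$ by the line bundles $V_j/V_{j-1}$, $j\ge i$). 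The first inequality is argued correctly, and your closing remark that only the inequalities $\deg(V_j/V_{j-1})\le\deg(U_j/U_{j-1})$ and the monotonicity of the $U$-degrees are used matches the paper's proof as well.

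There is, however, a flaw in your justification of the second inequality: checking sub-line-bundles does not control $\mu_{\max}$, because the maximal destabilizing subsheaf need not have rank one (a stable rank-two bundle of degree one has every line subbundle of degree $\le 0$ while $\mu_{\max}=1/2$), and the subsheaf you actually need to bound, namely the image $Q$ of $HN_j(V)$, may itself have rank larger than one. The observation you invoke is true, and the repair is routine: for an arbitrary nonzero subsheaf $F$ of a bundle $E$ carrying a filtration with line-bundle quotients of degree $\le d$, intersect $F$ with the penultimate step $E'$; by induction on rank $\deg(F\cap E')\le d\cdot\mathrm{rk}(F\cap E')$, while $F/(F\cap E')$ injects into the top line-bundle quotient and so has degree $\le d$, giving $\mu(F)\le d$. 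Equivalently, apply inductively the standard inequality $\mu_{\max}(E)\le\max\left(\mu_{\max}(E'),\mu_{\max}(E/E')\right)$ for an extension. With that step corrected, your proof is complete and essentially coincides with the paper's.
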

\begin{proof}If there is some $\mu_i(V)$ bigger than $
\mathrm{deg}(U_i/U_{i-1})$, where $\mu_i(V)$ equals $\mu(\mathrm{gr}^{HN(V)}_j)$, then for $l\geq i$,
$$\mu_i(V)>\mathrm{deg}(U_i/U_{i-1})\geq \mathrm{deg}(U_l/U_{l-1})\geq
\mathrm{deg}(V_l/V_{l-1}).$$

For $m\leq j,l\geq i$, the quotients $\mathrm{gr}^{HN(V)}_m$ and $V_l/V_{l-1}$ are
semistable. $$\mu(\mathrm{gr}^{HN(V)}_m)\geq\mu(\mathrm{gr}^{HN(V)}_j)=\mu_i(V)>\mathrm{deg}(V_l/V_{l-1}),$$ so
any map $\mathrm{gr}^{HN(V)}_m\rightarrow V_l/V_{l-1}$ is zero. Thus any map
$\mathrm{gr}^{HN(V)}_m\rightarrow V/V_{i-1}$ is zero by induction on $l$, and
any map $HN_j(V) \rightarrow V/V_{i-1}$ is zero by induction on $m$.

So the the canonical morphism $HN_j(V)\hookrightarrow
V\rightarrow V/V_{i-1}$ is zero, that is $HN_j(V)\hookrightarrow
V_{i-1}$, which is a contradiction because $\mathrm{rk}(HN_j(V))\geq
i>i-1=\mathrm{rk}(V_{i-1})$.
\end{proof}
Let $\mathrm{grad}(HN(V))$ denote the direct sum of the graded quotients of
the Harder-Narasimhan filtration, that is $\mathrm{grad}(HN(V))=\oplus \mathrm{gr}^{HN(V)}_i$.
\begin{lemma}[\cite{YZ}]\label{directsum}For vector bundles $V_1,...,V_n$, we have: $$\mathrm{grad}(HN(V_1\oplus\cdots\oplus V_n))=\mathrm{grad}(HN(V_1))\oplus\cdots\oplus \mathrm{grad}(HN(V_n))$$
and $\mu_i(V_j)$   equals $\mu_k(V_1\oplus\cdots\oplus V_n)$ for some
$k$.
\end{lemma}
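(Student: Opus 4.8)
The plan is to induct on $n$; all the content is in the case $n=2$, so I would write $V=V_1\oplus V_2$ and obtain the general case by iterating. The one external input needed is the standard fact that a direct sum of semistable bundles all of one and the same slope $\nu$ is again semistable of slope $\nu$: given $W\subseteq A\oplus B$ with $\mu(A)=\mu(B)=\nu$, one views $W$ as an extension of its image in $B$ by $W\cap A\subseteq A$ and adds the two slope inequalities coming from semistability of $A$ and of $B$; this forces $\mu(W)\le\nu$. (Equivalently this follows from the zero-map property recalled in the text above.)

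Next I would assemble a candidate filtration of $V$ out of the Harder--Narasimhan data of $V_1$ and $V_2$. List all slopes that occur in $grad(HN(V_1))$ or in $grad(HN(V_2))$ in strictly decreasing order $\nu_1>\nu_2>\dots>\nu_m$. For each $l$ let $A_l\subseteq V_1$ be the step of $HN(V_1)$ whose graded pieces are exactly those of slope $\ge\nu_l$ (so $A_l=A_{l-1}$ if $\nu_l$ does not occur in $V_1$), define $B_l\subseteq V_2$ analogously, and set $F_l:=A_l\oplus B_l$, with $F_0:=0$. Then $0=F_0\subset F_1\subset\dots\subset F_m=V$ and $F_l/F_{l-1}\cong (A_l/A_{l-1})\oplus(B_l/B_{l-1})$, each summand being either zero or a semistable graded piece of slope $\nu_l$; by the fact above $F_l/F_{l-1}$ is semistable of slope $\nu_l$, and the $\nu_l$ strictly decrease. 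By uniqueness of the Harder--Narasimhan filtration this $F_\bullet$ must coincide with $HN(V)$, which yields $grad(HN(V))=\bigoplus_l\big((A_l/A_{l-1})\oplus(B_l/B_{l-1})\big)=grad(HN(V_1))\oplus grad(HN(V_2))$, the first assertion.

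For the second assertion, $\mu_i(V_j)$ is by definition the slope of one of the graded pieces of $HN(V_j)$, hence equals some $\nu_l$; since the identification above shows $\nu_l$ is also the slope of the graded piece $F_l/F_{l-1}$ of $HN(V)$, we get $\mu_k(V)=\nu_l=\mu_i(V_j)$ for every index $k$ with $rk(F_{l-1})<k\le rk(F_l)$. I do not expect a genuine obstacle here: the only thing to watch is the bookkeeping when a slope occurs in one factor but not the other, which is absorbed by letting the corresponding graded summand vanish; the substance is just ``a direct sum of equal-slope semistable bundles is semistable'' together with the uniqueness of the Harder--Narasimhan filtration.
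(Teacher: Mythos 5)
Your proposal is correct and follows essentially the same route as the paper: both merge the Harder--Narasimhan filtrations of the two summands into a filtration of $V_1\oplus V_2$ whose graded pieces are semistable of strictly decreasing slopes (your indexing by the distinct slopes $\nu_1>\dots>\nu_m$ is just a repackaging of the paper's case-by-case rule advancing $HN_{i_1}(V_1)\oplus HN_{i_2}(V_2)$), and then identify it with $HN(V_1\oplus V_2)$ by uniqueness. The only difference is cosmetic: you spell out, via the extension argument, the fact that a direct sum of equal-slope semistable bundles is semistable, which the paper dismisses as ``easy to check.''
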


\section{Filtration of the Hodge bundle} Let
$\Omega\mathcal{M}_g(m_1,...,m_k)$ be the stratum parameterizing
$(X,\omega)$ where $X$ is a curve of genus $g$ and $\omega$ is an
Abelian differentials (i.e.a holomorphic one-form) on $X$ that have
$k$ distinct zeros of order $m_1,...,m_k$. Denote by
$\overline{\Omega\mathcal{M}}_g(m_1,...,m_k)$ the Deligne-Mumford
compactification of $\Omega\mathcal{M}_g(m_1,...,m_k)$.  Denote by
$\Omega\mathcal{M}^{hyp}_g(m_1,...,m_k)$( resp. odd, resp. even) the
hyperelliptic (resp.  odd theta character, resp. even theta
character) connected component. (\cite{KZ03})

Let $\mathcal{Q}(d_1,...,d_n)$ be the stratum parameterizing $(X,q)$
where $X$ is a curve of genus $g$ and $q$ is a meromorphic quadratic
differentials with at most simple zeros on $X$ that have $k$
distinct zeros of order $d_1,...,d_n$ respectively.

If the quadratic differential is not a global square of a one-form,
there is a canonical double covering $\pi:Y\rightarrow X$ such that
$\pi^*q=\omega^2$. This covering is ramified precisely at the zeros
of odd order of $q$ and at the poles. It give a map
$$\phi:\mathcal{Q}(d_1,...,d_n)\rightarrow\Omega\mathcal{M}_g(m_1,...,m_k).$$
A singularity of order $d_i$ of $q$ give rise to two zeros of degree
$m=d_i/2$ when $d_i$ is even, single zero of degree $m=d+1$ when $d$
is odd. Especially, the hyperelliptic locus in a stratum
$\Omega\mathcal{M}_g(m_1,...,m_k)$ induces from a stratum
$\mathcal{Q}(d_1,...,d_k)$ satisfying $d_1+\cdots+d_n=-4$.

There is a nature action of $GL_2^+(\mathbb{R})$ on
$\Omega\mathcal{M}_g(m_1,...,m_k)$, whose orbits project to complex
geodesics in $\mathcal{M}_g$. The projection of an orbit is almost
always dense. If the stabilizer $SL(X,\omega)\subset
SL_2(\mathbb{R})$ of given form is a lattice, however, then the
projection of its orbit gives a closed, algebraic Teichm\"{u}ller
curve $C$.

 The Teichm\"{u}ller curve $C$ is an
algebraic curve in $\overline{\Omega\mathcal{M}}_g$ that is totally
geodesic with respect to the Teichm\"{u}ller metric.

After suitable base change, we can get a universal family
$f:S\rightarrow C$, which is a relatively minimal semistable model
with disjoint sections $D_1,...,D_k$; here $D_i|_X$ is a zero of
$\omega$ when restrict to each fiber $X$. (\cite{CM12})

Let $\mathcal{L}\subset f_*{\omega_{S/C}}$ be the   line bundle
whose fiber over the point corresponding to $X$ is
$\mathbb{C}\omega$, the generating differential of Teichm\"{u}ller
curves; it is also known as  the "maximal Higgs" line bundle. Let
$\Delta\subset \overline{B}$ be the set of points with singular
fibers, then the property of being ''maximal Higgs'' says by
definition that $\mathcal{L}\cong
\mathcal{L}^{-1}\otimes\omega_C(\mathrm{log}{\Delta})$ and
$$\mathrm{deg}(\mathcal{L})=(2g(C)-2+|\Delta|)/2,$$
together with an identification (relative canonical bundle formula
\cite{CM12} \cite{EKZ}):
\begin{equation}\label{canonical}
 \omega_{S/C}\simeq f^*\mathcal{L}\otimes \mathcal{O}(\sum^k_{i=1} m_i D_i).
\end{equation}
By the adjunction formula we get
$$D^2_i=-\omega_{S/C}D_i=-m_iD^2_i-\mathrm{deg}\,{\mathcal{L}},$$
and thus
\begin{equation}\label{intersection}
 D^2_i=-\frac{1}{m_i+1}\mathrm{deg}\, \mathcal{L}.
\end{equation}
For a line bundle $\mathcal{L}$ of degree $d$ on $X$, denote by
$h^0(\mathcal{L})$ the dimension of $H^0(X,\mathcal{L})$. From
the exact sequence
$$0\rightarrow f_*\mathcal{O}(d_1D_1+\cdots+d_kD_k)\rightarrow f_*\mathcal{O}(m_1D_1+\cdots+m_kD_k)=f_*(\omega_{S/C})\otimes\mathcal{L}^{-1},$$
and the fact that all subsheaves of a locally free sheaf on a curve
are locally free, we deduce that $f_*\mathcal{O}(d_1D_1+\cdots+d_kD_k)$
is a vector bundle of rank $h^0(d_1p_1+\cdots+d_kp_k)$, here
$p_i=D_i|_F$, $F$ is a generic fiber. We have constructed many
filtration of the Hodge bundle by using those vector bundles in \cite{YZ}(We call them Weierstrass filtration in \cite{YZ} because their structures are closely related with the Weierstrass semigroup of points of general fibers).

A fundament exact sequence for those filtration is the following:
\begin{equation}\label{basic}
0\rightarrow f_*\mathcal{O}(\sum (d_i-a_i)D_i)\rightarrow
f_*\mathcal{O}(\sum d_iD_i)\rightarrow f_*\mathcal{O}_{\sum
a_iD_i}(\sum d_iD_i)\overset{\delta}{\rightarrow}
\end{equation}
$$R^1f_*\mathcal{O}(\sum (d_i-a_i)D_i)\rightarrow
R^1f_*\mathcal{O}(\sum d_iD_i)\rightarrow 0.$$

There are many properties of these filtration:
\begin{lemma}[\cite{YZ}]\label{EE}
If $h^0(\sum d_ip_i)=h^0(\sum (d_i-a_i)p_i)$ holds in a general
fiber, then we have the equality $f_*\mathcal{O}(\sum
d_iD_i)=f_*\mathcal{O}(\sum (d_i-a_i)D_i)$.
\end{lemma}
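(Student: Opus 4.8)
The plan is to extract the equality directly from the fundamental exact sequence \eqref{basic}, exploiting torsion-freeness of sheaves on the smooth curve $C$. Abbreviate $E=f_*\mathcal{O}(\sum(d_i-a_i)D_i)$, $F=f_*\mathcal{O}(\sum d_iD_i)$ and $T=f_*\mathcal{O}_{\sum a_iD_i}(\sum d_iD_i)$, so that the initial segment of \eqref{basic} reads $0\to E\to F\xrightarrow{\psi}T$. First I would record that $E$ and $F$ are vector bundles on $C$ of ranks $h^0(\sum(d_i-a_i)p_i)$ and $h^0(\sum d_ip_i)$: push-forwards of invertible sheaves from the irreducible surface $S$ are torsion-free, hence locally free on the smooth curve $C$, and their ranks equal the dimensions of the corresponding $H^0$'s on the generic fibre, which by upper semicontinuity of $h^0$ coincide with the values on a general fibre. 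Thus the hypothesis says precisely $\operatorname{rk}E=\operatorname{rk}F$.

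Next I would check that $T$ is likewise a vector bundle, of rank $\sum a_i$. The $D_i$ are pairwise disjoint sections contained in the smooth locus of $f$ (they are the zeros of the generating form $\omega$), so $\sum a_iD_i$ is an effective relative Cartier divisor that is finite and flat over $C$; as the push-forward of an invertible sheaf along a finite flat morphism, $T$ is locally free of rank $\sum a_i$. In particular $T$ is torsion-free.

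Finally I would show $\psi=0$, which immediately gives $E=\ker\psi=F$ by exactness of \eqref{basic}. Restricting \eqref{basic} to the generic fibre $X_\eta$ produces the exact sequence $0\to H^0(\sum(d_i-a_i)p_i)\to H^0(\sum d_ip_i)\xrightarrow{\psi_\eta}H^0(\mathcal{O}_{\sum a_ip_i}(\sum d_ip_i))$, and the assumed equality $h^0(\sum d_ip_i)=h^0(\sum(d_i-a_i)p_i)$ forces $\psi_\eta=0$, i.e. $\psi$ vanishes at the generic point of $C$. Since $\mathcal{H}om(F,T)$ is torsion-free ($F$ locally free, $T$ torsion-free) and $C$ is integral, any homomorphism $F\to T$ vanishing at the generic point is zero, so $\psi=0$. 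The only genuinely geometric input — and the step I would be most careful about — is the flatness of $\sum a_iD_i$ over $C$ used in the second paragraph, which relies on the $D_i$ being disjoint sections avoiding the nodes of the singular fibres, a feature of the semistable model $f\colon S\to C$ recalled above; the rest is bookkeeping with \eqref{basic} and torsion-freeness.
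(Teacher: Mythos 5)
Your proof is correct and is essentially the same argument as the one the paper relies on (the lemma is quoted from \cite{YZ}): it rests on the exact sequence \eqref{basic}, the identification of the ranks of $f_*\mathcal{O}(\sum d_iD_i)$ and $f_*\mathcal{O}(\sum (d_i-a_i)D_i)$ with $h^0$ on a general fibre, and the local freeness (hence torsion-freeness) of $f_*\mathcal{O}_{\sum a_iD_i}(\sum d_iD_i)$ coming from the disjoint sections $D_i$, which forces the torsion quotient to vanish. No gaps.
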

\begin{lemma}[\cite{YZ}]\label{SP}
If $h^0(\sum d_ip_i)=h^0(\sum (d_i-a_i)p_i)+\sum a_i$ is
non-varying, then
$$f_*\mathcal{O}(\sum d_iD_i)/f_*\mathcal{O}(\sum (d_i-a_i)D_i)=f_*\mathcal{O}_{\sum a_iD_i}(\sum d_iD_i)=\bigoplus f_*\mathcal{O}_{a_iD_i}(d_iD_i).$$
\end{lemma}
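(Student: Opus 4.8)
The plan is to extract everything from the fundamental exact sequence \eqref{basic},
$$0\to f_*\mathcal{O}(\textstyle\sum(d_i-a_i)D_i)\to f_*\mathcal{O}(\textstyle\sum d_iD_i)\xrightarrow{\ \psi\ } f_*\mathcal{O}_{\sum a_iD_i}(\textstyle\sum d_iD_i)\xrightarrow{\ \delta\ } R^1f_*\mathcal{O}(\textstyle\sum(d_i-a_i)D_i)\to\cdots,$$
so that the assertion reduces to the single statement that $\psi$ is surjective, together with a formal decomposition of its target. First I would dispose of the decomposition: since $D_1,\dots,D_k$ are pairwise disjoint sections, the torsion $\mathcal{O}_S$-module $\mathcal{O}_{\sum a_iD_i}(\sum d_iD_i)$ splits as the direct sum of the sheaves $\mathcal{O}_{a_iD_i}(d_iD_i)$, each supported on a distinct $D_i$. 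Applying $f_*$ and using that $a_iD_i\to C$ is finite flat of degree $a_i$ gives $f_*\mathcal{O}_{\sum a_iD_i}(\sum d_iD_i)=\bigoplus_i f_*\mathcal{O}_{a_iD_i}(d_iD_i)$, a vector bundle of rank $\sum a_i$ whose formation commutes with arbitrary base change; in particular its fiber at $c\in C$ is $H^0(X_c,\mathcal{O}_{\sum a_ip_i}(\sum d_ip_i))$, of dimension $\sum a_i$.

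Next I would check surjectivity of $\psi$ on each fiber. Restricting the relevant part of \eqref{basic} to $X=X_c$, the torsion sheaf $\mathcal{O}_{\sum a_ip_i}(\sum d_ip_i)$ has vanishing $H^1$ and $H^0$ of dimension $\sum a_i$, so the image of $H^0(X,\mathcal{O}(\sum d_ip_i))\to H^0(X,\mathcal{O}_{\sum a_ip_i}(\sum d_ip_i))$ has dimension $h^0(\sum d_ip_i)-h^0(\sum(d_i-a_i)p_i)$, which equals $\sum a_i$ by hypothesis; hence this map is surjective. This is the only place the hypothesis is used, and since it holds in a non-varying fashion it gives surjectivity on \emph{every} fiber, and moreover forces the $h^0$'s of all the divisors in play to be constant along $C$.

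To upgrade fiberwise surjectivity to surjectivity of the sheaf map $\psi$, I would invoke cohomology and base change. Constancy of $h^0(\sum(d_i-a_i)p_i)$ and of $h^0(\sum d_ip_i)$ makes $f_*\mathcal{O}(\sum(d_i-a_i)D_i)$ and $f_*\mathcal{O}(\sum d_iD_i)$ locally free with formation commuting with base change (Grauert), and also makes $R^1f_*\mathcal{O}(\sum(d_i-a_i)D_i)$ locally free. Consequently $\psi\otimes k(c)$ is identified with the fiberwise map of the previous paragraph, which is onto; by Nakayama $\psi$ is surjective. (Equivalently: $\operatorname{coker}\psi$ embeds in $R^1f_*\mathcal{O}(\sum(d_i-a_i)D_i)$ and, by the rank count at the generic point, is torsion, while that sheaf is now torsion-free, so $\operatorname{coker}\psi=0$.) Together with the first paragraph this yields both equalities; the very same dichotomy — ``$\delta$ zero'' here against ``$\delta$ onto'' — is the mechanism behind Lemma \ref{EE}.

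The step I expect to carry the real content is this passage from fibers to the total sheaf: without the non-varying hypothesis the pushforwards $f_*\mathcal{O}(\sum d_iD_i)$ need not commute with base change and $R^1f_*\mathcal{O}(\sum(d_i-a_i)D_i)$ can acquire torsion precisely at the jumping points, so one cannot in general conclude $\operatorname{coker}\psi=0$; it is exactly the constancy of the relevant $h^0$'s that removes this obstruction. The disjoint-sections decomposition and the rank bookkeeping are routine.
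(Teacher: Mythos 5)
This lemma is quoted in the present paper from \cite{YZ} and no proof of it is reproduced here, so there is no in-paper argument to compare against line by line; judged on its own terms, your proof is correct and follows the route one would expect: the splitting $f_*\mathcal{O}_{\sum a_iD_i}(\sum d_iD_i)=\bigoplus_i f_*\mathcal{O}_{a_iD_i}(d_iD_i)$ from disjointness of the sections (the same observation the paper itself uses in the proof of Theorem \ref{main}), fiberwise surjectivity of $\psi$ from the numerical hypothesis via the long exact sequence on each fiber, and the upgrade to surjectivity of the sheaf map via Grauert/Nakayama, or equivalently via the observation that $\operatorname{coker}\psi$ is a torsion subsheaf of $R^1f_*\mathcal{O}(\sum(d_i-a_i)D_i)$, which is torsion-free once the relevant $h^1$ is constant. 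Your identification of the base-change step as the place where the non-varying hypothesis does the real work is exactly right.

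One clarification you should make explicit: your parenthetical claim that the hypothesis ``forces the $h^0$'s of all the divisors in play to be constant along $C$'' is only valid under the reading that the quantity $h^0(\sum d_ip_i)$ itself is non-varying (constant along $C$) and equals $h^0(\sum(d_i-a_i)p_i)+\sum a_i$ on every fiber, in which case constancy of $h^0(\sum(d_i-a_i)p_i)$ follows by subtraction. If ``non-varying'' were read merely as ``the equality holds on every fiber,'' semicontinuity alone would not exclude both $h^0$'s jumping simultaneously at a special fiber, and then neither Grauert nor the torsion-freeness of $R^1f_*\mathcal{O}(\sum(d_i-a_i)D_i)$ would be available, so the fiberwise surjectivity could not be promoted to surjectivity of $\psi$. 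Since the intended meaning in the non-varying-strata context is constancy of these $h^0$'s, this is a matter of stating the hypothesis precisely rather than a gap in the mathematics; with that reading fixed, your argument is complete.
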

\begin{lemma}[\cite{YZ}]\label{HN}
The Harder-Narasimhan filtration of $f_*\mathcal{O}_{aD}(dD)$ is
$$0\subset f_*\mathcal{O}_{D}((d-a+1)D)\subset \cdots\subset f_*\mathcal{O}_{(a-1)D}((d-1)D)\subset f_*\mathcal{O}_{aD}(dD),$$
and the direct sum of the graded quotient of this filtration is
$$\mathrm{grad}(HN(f_*\mathcal{O}_{aD}(dD)))=\overset{a-1}{\underset{i=0}{\bigoplus}}
\mathcal{O}_{D}((d-i)D).$$
\end{lemma}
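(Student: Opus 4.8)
The plan is to exhibit the asserted filtration explicitly on $S$, push it down to $C$, identify the graded quotients as line bundles via the intersection formula \eqref{intersection}, and conclude by the defining conditions and uniqueness of the Harder--Narasimhan filtration.

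Fix the section $D=D_i$, of vanishing order $m_i$, so that $f|_D\colon D\to C$ is an isomorphism and $aD$ (hence each $jD$) is finite over $C$. For $0\le j\le a$ put $G^j:=\mathcal O_S\big((d-a+j)D\big)/\mathcal O_S\big((d-a)D\big)=\mathcal O_{jD}\big((d-a+j)D\big)$; the inclusion of ideals $\mathcal O_S(-(a-j)D)\hookrightarrow\mathcal O_S$, twisted by $\mathcal O_S(dD)$ and reduced modulo $\mathcal O_S((d-a)D)$, identifies $G^j$ with a subsheaf of $\mathcal O_S(dD)/\mathcal O_S((d-a)D)=\mathcal O_{aD}(dD)$, with $G^0=0$, $G^a=\mathcal O_{aD}(dD)$, $G^{j-1}\subset G^j$, and short exact sequences $0\to G^{j-1}\to G^j\to\mathcal O_D\big((d-a+j)D\big)\to 0$. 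All of these are $\mathcal O_{aD}$-modules and $aD$ is finite over $C$, so $f_*$ is exact on them; pushing forward gives the filtration $0=f_*G^0\subset\cdots\subset f_*G^a=f_*\mathcal O_{aD}(dD)$, whose $j$-th term is exactly $f_*\mathcal O_{jD}\big((d-a+j)D\big)$ (the terms written in the statement) and whose $j$-th graded quotient is $f_*\mathcal O_D\big((d-a+j)D\big)$, a line bundle on $C$ since $f|_D$ is an isomorphism.

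I would then compute slopes. Because $f|_D$ is an isomorphism, $\deg f_*\mathcal O_D\big((d-a+j)D\big)=\big((d-a+j)D\big)\cdot D=(d-a+j)\,D^2=-\tfrac{d-a+j}{m_i+1}\deg\mathcal L$ by \eqref{intersection}. Since $\deg\mathcal L=(2g(C)-2+|\Delta|)/2>0$ on a Teichm\"uller curve, these degrees strictly decrease as $j$ runs from $1$ to $a$. Each graded quotient is a line bundle, hence semistable, and the slopes strictly decrease, so by the definition and uniqueness of the Harder--Narasimhan filtration this filtration is $HN(f_*\mathcal O_{aD}(dD))$. Reindexing by $i=a-j$ rewrites the collection of graded quotients $\{f_*\mathcal O_D((d-a+j)D)\}_{j=1}^{a}$ as $\{\mathcal O_D((d-i)D)\}_{i=0}^{a-1}$, giving the claimed description of $grad(HN(f_*\mathcal O_{aD}(dD)))$.

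The one step that needs genuine care is the commutation of the filtration with $f_*$: that the pushed-forward sequences stay exact and that the graded quotients are truly the pushforwards of the $\mathcal O_D\big((d-a+j)D\big)$, which relies on $aD$ (and its subschemes) being finite over $C$ so that $R^1f_*$ vanishes on them. Everything else is bookkeeping with ideal sheaves plus one appeal to \eqref{intersection} --- in particular negative values of $d-a+j$ are harmless, since the degree of a line bundle on $D\cong C$ is still the relevant intersection number, and the positivity $\deg\mathcal L>0$ is precisely what makes the slope drops strict.
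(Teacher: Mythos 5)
Your proof is correct and complete: the ideal-power filtration $G^j=\mathcal O_S((d-a+j)D)/\mathcal O_S((d-a)D)$, exactness of $f_*$ on $\mathcal O_{aD}$-modules because $aD$ is finite over $C$, the degree computation $\deg f_*\mathcal O_D((d-a+j)D)=(d-a+j)D^2$ via \eqref{intersection} with $\deg\mathcal L>0$ forcing strictly decreasing slopes, and the observation that line bundles are semistable so uniqueness of the Harder--Narasimhan filtration applies, together give exactly the stated filtration and graded pieces. The paper itself quotes this lemma from \cite{YZ} without reproducing a proof, and your argument is the natural one given there, so there is nothing to add.
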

\begin{lemma}[\cite{YZ}]\label{UP}Let $V=f_*\mathcal{O}(\sum d_iD_i)/f_*\mathcal{O}(\sum (d_i-a_i)D_i))$ and $r=h^0(\sum
d_ip_i)-h^0(\sum (d_i-a_i)p_i)$, where $p_i$ is $D_i|_F$ for a general fiber $F$. We order degrees of line bundles in the
set
$$\{\mathcal{O}_{D_i}((d_i-j)D_i)|1\leq i\leq k, 0\leq j\leq a_i-1\},$$
decreasing(counted with multiplicity) as $b_1\geq b_2\geq\cdot\cdot\cdot\geq b_{a_1+\cdot\cdot\cdot+a_k}$. Then
$$\mathrm{deg}(V)\leq b_1+b_2+\cdot\cdot\cdot+b_r.$$
\end{lemma}
For a Teichm\"{u}ller curves lying in hyperelliptic loci and low
genus non-varying strata, we have constructed the Harder-Narasimhan
filtration.

Write $w_i$ for $\mu_i(f_*(\omega_{S/C}))/\mathrm{deg}(\mathcal{L})$.

\begin{theorem}[\cite{YZ}] \label{YZ}Let $C$ be a Teichm\"{u}ller curve in the hyperelliptic locus of some
stratum $\overline{\Omega\mathcal{M}}_g(m_1,...,m_k)$, and denote by
$(d_1,...,d_n)$ the orders of singularities of underlying quadratic
differentials. Then $w_i$ for $C$ is the $i$-th largest number in
the set
$$\{1\}\cup\Big\{1-\frac{2k}{d_j+2}\Big\}_{\forall d_j, 0<2k\leq d_j+1}$$
For  a Teichm\"{u}ller curve lying in some low genus non varying
strata, the $w_i$'s are computed in Table 1, 2 and 3.
\end{theorem}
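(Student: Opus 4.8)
The plan is to reduce the computation of the $w_i$ to an explicit linear-series computation on a single hyperelliptic fibre, transport the answer through the exact sequence \eqref{basic} using Lemmas \ref{EE}, \ref{SP}, \ref{HN} and \ref{UP}, and pin down the exact slopes at the end by a global degree identity. Let $Y$ be a general fibre of $f\colon S\to C$. Because $C$ lies in a hyperelliptic locus, $Y$ carries a hyperelliptic involution $\iota$ with quotient $\pi\colon Y\to\mathbb{P}^{1}$, and the generating differential satisfies $\omega^{2}=\pi^{*}q$. A singularity of $q$ of odd order $d_j$ (in particular a simple pole, which contributes nothing to $\omega$) is a branch point of $\pi$ and lifts to a single Weierstrass zero of $\omega$ of order $m=d_j+1$; a singularity of even order $d_j=2k_j$ is not a branch point and lifts to an $\iota$-conjugate pair $p_j^{\pm}$, each a zero of order $m=k_j$. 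The sections $D_i$ of $f$ are the families of these points, and $D_i^{2}=-\tfrac{1}{m_i+1}\deg\mathcal{L}$ by \eqref{intersection}.

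The heart of the matter is the jump pattern of $h^{0}\big(Y,\sum c_i p_i\big)$ as the coefficients run from $0$ to $m_i$. Using $\pi_{*}\mathcal{O}_Y=\mathcal{O}_{\mathbb{P}^{1}}\oplus\mathcal{O}_{\mathbb{P}^{1}}(-g-1)$ together with $2p_j\sim\pi^{*}x_j$ at a Weierstrass point and $p_j^{+}+p_j^{-}\sim\pi^{*}x_j$ at a conjugate pair, every such divisor is, up to linear equivalence, a pull-back from $\mathbb{P}^{1}$ modulo a reduced sum of Weierstrass points, so $h^{0}$ may be read off on $\mathbb{P}^{1}$. Concretely, at a Weierstrass section the hyperelliptic gap sequence $1,3,\dots,2g-1$ makes $h^{0}$ increase by one exactly when the coefficient passes an even value $2k$ with $0<2k\le d_j+1$, while at a conjugate pair raising a single coefficient never changes $h^{0}$ in the relevant range and raising the common coefficient from $c-1$ to $c$ raises $h^{0}$ by one for $c=1,\dots,k_j$. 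Since $h^{0}(\sum m_ip_i)=h^{0}(\omega_Y)=g$, there are exactly $g-1$ such jumps above the rank-one bottom term $\mathcal{L}\otimes f_{*}\mathcal{O}_S=\mathcal{L}$. Establishing this jump pattern precisely --- especially for divisors that mix Weierstrass sections with conjugate pairs over even-order singularities --- is the step I expect to be the main obstacle; everything downstream is formal.

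Given the jump pattern, Lemma \ref{EE} collapses the inert steps of the chain $\mathcal{L}\otimes f_{*}\mathcal{O}(\sum d_iD_i)\subset f_{*}\omega_{S/C}$; at a Weierstrass section Lemmas \ref{SP} and \ref{HN} then identify the graded piece produced by the coefficient $2k$ with the line bundle $\mathcal{L}\otimes\mathcal{O}_{D_i}((2k)D_i)$, while at a conjugate pair the graded piece produced by the common coefficient $c$ is a rank-one subsheaf of $\mathcal{L}\otimes\big(\mathcal{O}_{D_i}(cD_i)\oplus\mathcal{O}_{D_{i'}}(cD_{i'})\big)$, whose degree Lemma \ref{UP} bounds. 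By \eqref{intersection},
$$\deg\big(\mathcal{L}\otimes\mathcal{O}_{D_i}(cD_i)\big)=\Big(1-\tfrac{c}{m_i+1}\Big)\deg\mathcal{L},$$
and with $m_i+1=d_j+2$, $c=2k$ at a Weierstrass section and $m_i+1=\tfrac{d_j+2}{2}$, $c=k$ at a conjugate pair, every graded piece has normalized slope at most $1-\tfrac{2k}{d_j+2}$ with $0<2k\le d_j+1$, while $\mathcal{L}$ itself contributes $1$. Summing these upper bounds reproduces the value of $L(C)=\deg f_{*}\omega_{S/C}/\deg\mathcal{L}$ already known for hyperelliptic loci from \cite{EKZ}, so every bound is an equality. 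The resulting numbers lie in $(0,1]$ and strictly decrease once coincident values are grouped; by Lemma \ref{control} no coarser subbundle destabilizes and by Lemma \ref{directsum} the choice of which conjugate section carries a given line bundle is immaterial, so this chain is the Harder-Narasimhan filtration and its $w_i$ are $1,\{1-\tfrac{2k}{d_j+2}\}_{0<2k\le d_j+1}$.

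For a Teichm\"{u}ller curve in one of the listed low-genus non-varying strata the same machine applies once the generic values of $h^{0}(\sum d_ip_i)$ (equivalently, the generic Weierstrass gap sequences at the zeros) are fixed, and these are supplied by the non-varying results for those strata (\cite{CM11}); substituting them into Lemmas \ref{EE}, \ref{SP}, \ref{HN} and repeating the slope bookkeeping --- now with no hyperelliptic structure needed --- yields the filtrations recorded in Tables 1--3.
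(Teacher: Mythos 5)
The paper itself contains no proof of Theorem \ref{YZ}: it is quoted from \cite{YZ}, and the lemmas reproduced here (\ref{EE}, \ref{SP}, \ref{HN}, \ref{UP}, \ref{control}, \ref{directsum}) are exactly the toolkit of that source. Measured against it, your treatment of the hyperelliptic half is essentially the same approach and is sound. The step you flag as the main obstacle does go through by the reduction you name: writing a subcanonical divisor $D=\sum a_iw_i+\sum(b_jp_j^++c_jp_j^-)$ supported on the zeros as $\pi^*E+R$ with $R$ effective, conjugate-free and reduced at Weierstrass points, one has $\deg E=\sum\lfloor a_i/2\rfloor+\sum\min(b_j,c_j)$ and $\deg E+\deg R\le\sum\lceil a_i/2\rceil+\sum\max(b_j,c_j)\le g-1$, so all sections of $K-D$ are pullbacks from $\mathbb{P}^1$ and Riemann--Roch gives $h^0(D)=1+\deg E$; this is precisely your jump pattern, and it also forces exactly $g-1$ jumps. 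The one genuinely different move is at the end: you import the Eskin--Kontsevich--Zorich value of $L(C)$ on hyperelliptic loci to force every slope bound to be an equality. That is legitimate (the claimed multiset does sum to the EKZ value), but it lets an external analytic input carry the equality. The intrinsic route, which is the one of \cite{YZ}, is to order the increments so that each $h^0$-jump occurs at a one-step increment (coefficient $2k-1\to 2k$ at a Weierstrass section; the second member of a conjugate pair reaching the common value $c$), where the jump equals $\sum a_i=1$ and is non-varying because every fiber is hyperelliptic with the same configuration, so Lemma \ref{SP} identifies the graded quotient with $\mathcal{L}\otimes\mathcal{O}_{D_i}(2kD_i)$, resp. $\mathcal{L}\otimes\mathcal{O}_{D_i}(cD_i)$, on the nose; the sum formula then comes out as a corollary rather than going in as an input, and the identification with the Harder--Narasimhan filtration is immediate because a full flag with line-bundle quotients of weakly decreasing degree coarsens to the HN filtration.

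The second half of the statement, the tables for the low-genus non-varying strata, you only gesture at. There the hyperelliptic structure is unavailable, and the generic $h^0$'s (equivalently the Weierstrass gap data at the zeros, which genuinely depend on the connected component --- compare $w_2=4/7$ for the even and $3/7$ for the odd component of $\overline{\Omega\mathcal{M}}_4(6)$) must be taken stratum by stratum from the non-varying results of \cite{CM11}\cite{CM12} and pushed through Lemmas \ref{EE}, \ref{SP}, \ref{UP} case by case. Saying ``the same machine applies'' is the correct idea and is indeed what \cite{YZ} does, but as written this part of your proposal is a pointer to the source, not a derivation of Tables 1--3.
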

\begin{table}
\caption{genus 3}
\begin{tabular}{|c|c|c|c|c|c|}
  \hline
  zeros & component & $w_2$& $w_3$ & $\sum w_i$  \\
   \hline
  (4)& hyp & 3/5 & 1/5 & 9/5  \\
    \hline
  (4) & odd & 2/5 & 1/5 & 8/5  \\
  \hline
   (3,1) &   & 2/4 & 1/4 & 7/4  \\
  \hline
   (2,2) & hyp & 2/3 & 1/3 & 2  \\
  \hline
   (2,2) & odd & 1/3 & 1/3 & 5/3  \\
  \hline
   (2,1,1) &   & 1/2 & 1/3 & 11/6  \\
  \hline
  (1,1,1,1) &   &   &   &   $\leq 2$ \\
  \hline
\end{tabular}
\end{table}
\begin{table}
\caption{genus 4}
 \begin{tabular}{|c|c|c|c|c|c|c|}
  \hline
  zeros & component & $w_2$& $w_3$ &$w_4$ & $\sum w_i$  \\
   \hline
  (6)&  hyp & 5/7 & 3/7 & 1/7& 16/7 \\
    \hline
  (6) & even& 4/7 & 2/7 & 1/7 &14/7   \\
  \hline
  (6) & odd & 3/7 & 2/7 & 1/7 &13/7   \\
  \hline
   (5,1) &  & 1/2 & 2/6 & 1/6 & 2   \\
   \hline
   (3,3) & hyp & 3/4 & 2/4 & 1/4 & 5/2  \\
   \hline
   (3,3) & non-hyp & 2/4 & 1/4 & 1/4 & 2  \\
  \hline
     (4,2) & even & 3/5 & 1/3 & 1/5 &32/15   \\
  \hline
   (4,2) & odd & 2/5 & 1/3 & 1/5 &29/15   \\
  \hline
  (2,2,2) &   & 1/3 & 1/3 & 1/3 &2 \\
  \hline
     (3,2,1) &   & 1/2 & 1/3 & 1/4 &25/12   \\
  \hline
\end{tabular}
\end{table}

\begin{table}
\caption{genus 5}
\begin{tabular}{|c|c|c|c|c|c|c|c|}
  \hline
  zeros & component & $w_2$& $w_3$ &$w_4$ &$w_5$ & $\sum w_i$  \\
   \hline
  (8)&  hyp & 7/9 & 5/9 & 3/9 &1/9& 25/9 \\
    \hline
  (8) & even& 5/9 & 3/9 & 2/9 &1/9&20/9   \\
  \hline
  (8) & odd & 4/9 & 3/9 & 2/9 &1/9&19/9   \\
  \hline
   (5,3) &   & 1/2 & 1/3 & 1/4 &1/6& 9/4   \\
  \hline
      (6,2) & odd  & 3/7 & 1/3 & 2/7 &1/7& 46/21  \\
  \hline
     (4,4) & hyp  & 4/5 & 3/5 & 2/5 &1/5& 3  \\
  \hline
\end{tabular}
\end{table}
\section{Lyapunov exponents}
A good introduction to Lyapunov exponents with a lot of motivating
examples is the survey by Zorich (\cite{Zo}).

 Fix an $SL_2(\mathbb{R})$-invariant, ergodic measure $\mu$ on $\Omega\mathcal{M}_g$. Let V be the
restriction of the real Hodge bundle (i.e. the bundle with fibers
$H^1(X,\mathbb{R})$) to the support $M$ of $\mu$. Let $S_t$ be the
lift of the geodesic flow to $V$ via the Gauss-Manin connection.
Then Oseledec's multiplicative ergodic theorem guarantees the
existence of a filtration
$$0\subset V_{\lambda_g}\subset \cdots\subset V_{\lambda_1}=V$$
by measurable vector subbundles with the property that, for almost
all $m\in M$ and all $v\in V_m\backslash\{0\}$ one has
$$||S_t(v)||=\mathrm{exp}(\lambda_it+o(t))$$
where $i$ is the maximal index such that $v$ is in the fiber of
$V_i$ over $m$ i.e.$v\in(V_i)_m$. The numbers $\lambda_i$ for
$i=1,...,k\leq \mathrm{rank}(V)$ are called the \emph{Lyapunov exponents} of
$S_t$. Since $V$ is symplectic, the spectrum is symmetric in the
sense that $\lambda_{g+k}=-\lambda_{g-k+1}$. Moreover, from
elementary geometric arguments it follows that one always has
$\lambda_1=1$.

There is an algebraic interpretation of the sum of certain Lyapunov
exponents:

\begin{theorem}[\cite{KZ97}\cite{Fo02}\cite{BM10}]\label{sumly}If the Variation of Hodge structure (VHS) over the Teichm\"uller curve $C$ contains a sub-VHS $\mathbb{W}$
of rank $2k$, then the sum of the $k$ corresponding to non-negative
Lyapunov exponents equals
$$\overset{k}{\underset{i=1}{\sum}}\lambda^{\mathbb{W}}_i=\frac{2\mathrm{deg} \mathbb{W}^{(1,0)}}{2g(C)-2+|\Delta|},$$
where $\mathbb{W}^{(1,0)}$ is the $(1,0)$-part of the Hodge
filtration of the vector bundle associated with $\mathbb{W}$. In
particular, we have
$$\overset{g}{\underset{i=1}{\sum}}\lambda_i=\frac{2\mathrm{deg}f_*\omega_{S/C}}{2g(C)-2+|\Delta|}.$$
\end{theorem}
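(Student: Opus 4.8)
The plan is to reduce the equality to a Chern--Weil computation for the Hodge metric, following the strategy of Kontsevich and Forni. First, since the symplectic local system underlying $\mathbb{W}$ has symmetric Lyapunov spectrum, Oseledec's theorem gives
\[
\sum_{i=1}^{k}\lambda^{\mathbb{W}}_i=\lim_{t\to\infty}\frac{1}{t}\log\bigl\|\textstyle\bigwedge^{k}S_t\bigr\|,
\]
the top Lyapunov exponent of the flow induced on $\bigwedge^{k}\mathbb{W}$. Since $\mathbb{W}^{(1,0)}$ is a maximal isotropic holomorphic subbundle, $\det\mathbb{W}^{(1,0)}=\bigwedge^{k}\mathbb{W}^{(1,0)}$ is a holomorphic line subbundle of $\bigwedge^{k}\mathbb{W}$, and the content of the theorem is that this top exponent is computed by the degree of the Deligne canonical extension $\overline{\mathbb{W}^{(1,0)}}$ of $\mathbb{W}^{(1,0)}$ across the cusps $\Delta$ (for the Teichm\"uller curve, after base change, $\overline{V^{(1,0)}}$ is exactly $f_*\omega_{S/C}$).

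Second, the analytic core is the Forni--Kontsevich variational identity. Writing $C=\mathbb{H}/\Gamma$ and lifting the geodesic flow to the unit tangent bundle, the logarithmic derivative $\frac{d}{dt}\log\|S_tv\|$ of the Hodge norm of a flat section is controlled by the second fundamental form of $\mathbb{W}^{(1,0)}\subset\mathbb{W}$ for the Gauss--Manin connection, i.e.\ by the logarithmic Higgs (Kodaira--Spencer) field
\[
\theta\colon\mathbb{W}^{(1,0)}\longrightarrow\mathbb{W}^{(0,1)}\otimes\Omega^{1}_{\overline{C}}(\log\Delta).
\]
Using $SL_2(\mathbb{R})$-equivariance of the Hodge bundle and ergodicity of the hyperbolic area measure for the geodesic flow, the Birkhoff/Oseledec time-average of this logarithmic derivative over long geodesic segments converges to a fixed multiple of $\frac{1}{\mathrm{area}(C)}\int_{C}\|\theta\|^{2}$, the integral being taken against the hyperbolic area form in the Hodge--Frobenius norm. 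Hence $\sum_{i=1}^{k}\lambda^{\mathbb{W}}_i$ is a universal constant times this curvature integral.

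Third, one identifies the curvature integral with a degree. By Chern--Weil theory the first Chern form of $\det\mathbb{W}^{(1,0)}$ for the Hodge metric integrates to $2\pi\deg\overline{\mathbb{W}^{(1,0)}}$; combining this with the Hodge-theoretic Bochner--Kodaira formula, which expresses that curvature through $\|\theta\|^{2}$, with $\mathrm{area}(C)=2\pi(2g(C)-2+|\Delta|)$, and carefully tracking the normalization factor from the unit tangent bundle, yields
\[
\sum_{i=1}^{k}\lambda^{\mathbb{W}}_i=\frac{2\deg\overline{\mathbb{W}^{(1,0)}}}{2g(C)-2+|\Delta|}.
\]
Taking $\mathbb{W}=V$, so that $\overline{V^{(1,0)}}=f_*\omega_{S/C}$, gives the displayed special case; and since $\mathcal{L}\subset f_*\omega_{S/C}$ is maximal Higgs with $\deg\mathcal{L}=(2g(C)-2+|\Delta|)/2$, this is the same as $L(C)=\deg f_*\omega_{S/C}/\deg\mathcal{L}$.

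The main obstacle is the second step, i.e.\ rigorously proving that the Lyapunov exponent equals the space-average of the Hodge curvature; this is Forni's analysis of the Kontsevich--Zorich cocycle. The delicate points are (a) controlling the Hodge norm and its logarithmic derivative near the cusps $\Delta$, where one needs the Deligne canonical extension and Schmid's nilpotent-orbit and norm estimates to guarantee integrability of the curvature term, and (b) exchanging the Oseledec time-average with the space integral, which requires ergodicity of the geodesic flow together with an $L^{1}$-domination bound for $\frac{d}{dt}\log\|\cdot\|$.
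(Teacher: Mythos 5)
The paper never proves Theorem \ref{sumly}: it is quoted as background from \cite{KZ97}, \cite{Fo02} and \cite{BM10}, and is only used later through its consequence $L(C)=\deg f_*\omega_{S/C}/\deg\mathcal{L}$. So there is no internal argument to compare with; your sketch has to be measured against the proofs in those references, and in outline it is the right one: reduce $\sum_{i\le k}\lambda^{\mathbb{W}}_i$ to the top exponent of $\bigwedge^k$, relate the growth of the Hodge norm to the second fundamental form (logarithmic Higgs field) of $\mathbb{W}^{(1,0)}\subset\mathbb{W}$, and convert the resulting curvature integral into $\deg$ of the Deligne extension by Chern--Weil together with Schmid's norm estimates at the cusps, using $\mathrm{area}(C)=2\pi\,(2g(C)-2+|\Delta|)$ and the identification of the extended $(1,0)$-bundle of the full VHS with $f_*\omega_{S/C}$ for the semistable model. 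Your closing reconciliation with $\deg\mathcal{L}=(2g(C)-2+|\Delta|)/2$ also matches how the paper uses the statement.

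The one substantive imprecision is your second step, which you yourself flag as the main obstacle but then describe inaccurately: along a single Teichm\"uller geodesic the logarithmic derivative of the Hodge norm of a flat section is \emph{not} the curvature density, and ``ergodicity plus an $L^1$ bound'' does not by itself let you replace its Birkhoff average by $\frac{1}{\mathrm{area}(C)}\int_C\|\theta\|^2$; the integrand you would feed to Birkhoff depends on the section and on its position relative to the Oseledec filtration. The actual mechanism (Kontsevich's averaging trick, made rigorous by Forni and, in the sub-VHS/Teichm\"uller-curve setting stated here, by Bouw--M\"oller) is to average over the circle of directions in the $SL_2(\mathbb{R})$-orbit, i.e.\ over hyperbolic disks, which converts the time average into the hyperbolic Laplacian of $\log$ of the Hodge norm of $\det\mathbb{W}^{(1,0)}$; it is this Laplacian that Chern--Weil identifies with the curvature of the Hodge metric, and Schmid's estimates then make the integral compute the degree of the canonical extension rather than some smaller number. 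One also needs the small but genuine argument that the norm growth of the Lagrangian subbundle $\det\mathbb{W}^{(1,0)}$ realizes exactly $\lambda_1+\dots+\lambda_k$ (positivity of the Hodge form on $\mathbb{W}^{(1,0)}$ guarantees the required transversality to the non-positive Oseledec subspace). With those two points either proved or explicitly delegated to \cite{Fo02}/\cite{BM10}, your sketch is an adequate account of the cited theorem.
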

Let $L(C)=\overset{g}{\underset{i=1}{\sum}}\lambda_i$ be the sum of
Lyapunov exponents, and put
$\kappa_{\mu}=\frac{1}{12}\overset{k}{\underset{i=1}{\sum}}\frac{m_i(m_i+2)}{m_i+1}$.
Eskin, Kontsevich and Zorich obtain a formula to compute $L(C)$ (for
the Teichm\"{u}ller geodesic flow):
\begin{theorem}[\cite{EKZ}]For the VHS over the Teichm\"uller curve $C$, we have
$$L(C)=\kappa_{\mu}+\frac{\pi^2}{3}c_{area}(C),$$
where $c_{area}(C)$ is the area Siegel-Veech constant corresponding to
$C$.
\end{theorem}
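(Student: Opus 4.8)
The plan is to reduce the identity to an intersection-theoretic computation on the total space $S$ together with a degeneration analysis at the cusps of $C$. The sum $L(C)$ already has an algebraic incarnation through Theorem \ref{sumly}, the quantity $k_\mu$ will emerge from the self-intersection of $\omega_{S/C}$, and the Siegel-Veech term $\frac{\pi^2}{3}c_{area}(C)$ will account for the nodes of the singular fibers.

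First I would apply Theorem \ref{sumly} together with the identity $\deg\mathcal{L}=(2g(C)-2+|\Delta|)/2$ to write
$$L(C)=\frac{2\deg f_*\omega_{S/C}}{2g(C)-2+|\Delta|}=\frac{\deg f_*\omega_{S/C}}{\deg\mathcal{L}}.$$
Since $f$ is relatively minimal and semistable, the relative Noether formula for such a fibration gives
$$12\deg f_*\omega_{S/C}=\omega_{S/C}^2+\delta,$$
where $\delta$ is the total number of nodes in the fibers of $f$. It then remains to evaluate the two terms on the right.

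For the first term I would compute $\omega_{S/C}^2$ directly from the relative canonical bundle formula \eqref{canonical} and the self-intersection \eqref{intersection}. Passing to divisor classes, $\omega_{S/C}\equiv f^*\mathcal{L}+\sum_i m_iD_i$; because the sections are disjoint we have $D_i\cdot D_j=0$ for $i\neq j$, while $(f^*\mathcal{L})^2=0$ and $f^*\mathcal{L}\cdot D_i=\deg\mathcal{L}$. Combining these with $D_i^2=-\frac{1}{m_i+1}\deg\mathcal{L}$ gives
$$\omega_{S/C}^2=\deg\mathcal{L}\sum_i\Big(2m_i-\frac{m_i^2}{m_i+1}\Big)=\deg\mathcal{L}\sum_i\frac{m_i(m_i+2)}{m_i+1},$$
so that $\omega_{S/C}^2/(12\deg\mathcal{L})=k_\mu$ exactly. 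Substituting back yields
$$L(C)=k_\mu+\frac{\delta}{12\deg\mathcal{L}},$$
which reduces the theorem to the single identity $\delta/(12\deg\mathcal{L})=\frac{\pi^2}{3}c_{area}(C)$.

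The hard part will be this last identity, which carries the genuinely dynamical content of the statement. Here $\delta$ is the algebraic count of nodes in the stable reduction over the cusps, whereas $c_{area}(C)$ is the Siegel-Veech constant measuring the asymptotic number of maximal flat cylinders weighted by area. To match them I would argue cusp by cusp: degenerating along the Teichm\"uller geodesic toward a cusp, the thin part of the flat surface is a union of flat cylinders whose core curves pinch precisely to the nodes of the limiting stable curve, and the hyperbolic width of the cusp---which controls its contribution to $\deg\mathcal{L}$---is governed by the moduli of these collapsing cylinders. Summing the node contributions over all cusps and invoking equidistribution of the flow to rewrite the cusp sum as the Siegel-Veech average, with the factor $\pi^2/3$ arising from the normalization of the flat (Masur-Veech) measure against the hyperbolic area, produces the required equality. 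This local-to-global identification of nodes with cylinders, rather than the intersection computation above, is where essentially all the difficulty resides.
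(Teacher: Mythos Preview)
The paper does not give a proof of this statement at all: it is quoted verbatim from \cite{EKZ} as an external input, used only to remark that $c_{area}(C)\geq 0$ yields the lower bound $L(C)\geq k_\mu$. So there is no ``paper's own proof'' to compare your argument against.

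That said, your outline is a coherent and essentially correct algebraic route to the identity \emph{in the special case of a Teichm\"uller curve}, and it is genuinely different from what \cite{EKZ} actually do. Eskin--Kontsevich--Zorich prove the formula for an arbitrary $SL_2(\mathbb{R})$-invariant ergodic probability measure; their argument is analytic, passing through the hyperbolic Laplacian acting on the logarithm of the Hodge norm and a careful treatment of the thin part via the Siegel--Veech transform. None of that uses Noether's formula or the semistable model. Your approach, by contrast, exploits the fact that for a Teichm\"uller curve one has an honest algebraic family $f:S\to C$, so that Noether's relation $12\deg f_*\omega_{S/C}=\omega_{S/C}^2+\delta$ and the self-intersection computation from \eqref{canonical}--\eqref{intersection} are available; this is the line taken in \cite{CM11}. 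What you gain is a clean decomposition $L(C)=k_\mu+\delta/(12\deg\mathcal{L})$ with no analysis whatsoever; what you lose is any applicability beyond closed orbits.

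Your final paragraph, however, is where the argument is still only a gesture. The identity $\delta/(12\deg\mathcal{L})=\tfrac{\pi^2}{3}c_{area}(C)$ does hold for Teichm\"uller curves, but ``invoking equidistribution'' is not the mechanism: for a closed orbit the Siegel--Veech constant is computed \emph{exactly} as a finite sum over the cusps of $C$, each cusp contributing the number of core curves (equivalently, nodes of the stable fiber) weighted by the reciprocal of the cusp width, while $2\deg\mathcal{L}=2g(C)-2+|\Delta|$ is $\pi^{-1}$ times the hyperbolic area of $C$. Matching these normalizations is a bookkeeping exercise (carried out, for instance, in \cite{EKZ} \S 9 or in the Chen--M\"oller papers), not an equidistribution argument. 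If you want a self-contained proof you should replace the heuristic in your last paragraph by that explicit cusp-by-cusp computation.
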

Because the Siegel-Veech constant is non-negative, there is a lower
bound $L(C)\geq k_{\mu}$.
\section{Upper bounds}
Denote by $|\mathcal{L}|$ the projective space of one-dimensional
subspaces of $H^0(X,\mathcal{L})$. For a (projective) $r$-dimension
linear subspace $V$ of $|\mathcal{L}|$, we call $(\mathcal{L},V)$ a
linear series of type $g^r_d$, here $d$ equals $\deg(\mathcal{L})$.

\begin{theorem}[Clifford's theorem \cite{Ha}]\label{clifford}
Let $\mathcal{L}$ be an effective special divisor (i.e.
$h^1(\mathcal{L})\neq 0$) on the curve $X$. Then
$$h^0(\mathcal{L})\leq 1+\frac{1}{2}\mathrm{deg}(\mathcal{L}).$$
Furthermore, equality occurs if and only if either $\mathcal{L}=0$
or $\mathcal{L}=K$ or $X$ is hyperelliptic and $\mathcal{L}$ is a
multiple of the unique linear series of type $g^1_2$ on $X$.
\end{theorem}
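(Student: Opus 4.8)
The plan is to follow the classical argument. First I would dispose of the degenerate cases: if $\mathcal{L}=0$ then $h^0(\mathcal{L})=1$ and $\deg(\mathcal{L})=0$, and if $\mathcal{L}\cong K$ then $h^0(\mathcal{L})=g=1+\tfrac12(2g-2)$, so in both situations the asserted equality holds. Thus I may assume $\mathcal{L}\neq 0$ and $\mathcal{L}\not\cong K$. Because $\mathcal{L}$ is special, $h^0(K-\mathcal{L})=h^1(\mathcal{L})\neq 0$, so $K-\mathcal{L}$ is linearly equivalent to an effective divisor, which is nonzero since $\mathcal{L}\not\cong K$; hence I can fix effective nonzero divisors $D,E$ on $X$ with $D\sim\mathcal{L}$, $E\sim K-\mathcal{L}$ and $D+E\sim K$.

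The heart of the proof is the additivity estimate
$$h^0(\mathcal{O}_X(A))+h^0(\mathcal{O}_X(B))\leq h^0(\mathcal{O}_X(A+B))+1$$
for arbitrary effective divisors $A,B$ on $X$, which I would prove by induction on $h^0(\mathcal{O}_X(B))$. If $h^0(\mathcal{O}_X(B))=1$ then $B$ is the only effective divisor in its class, and multiplication by a section cutting out $B$ gives an injection $H^0(\mathcal{O}_X(A))\hookrightarrow H^0(\mathcal{O}_X(A+B))$, which is the claim. If $h^0(\mathcal{O}_X(B))\geq 2$ then $|B|$ has positive dimension, so I can pick $P\in X$ that is not a base point of $|B|$ and avoids $\mathrm{Supp}(A)$; then $h^0(\mathcal{O}_X(B-P))=h^0(\mathcal{O}_X(B))-1$, and since $|A+B|$ contains a member $A+B'$ with $B'\in|B|$ and $P\notin B'$, also $h^0(\mathcal{O}_X(A+B-P))=h^0(\mathcal{O}_X(A+B))-1$; applying the inductive hypothesis to $A$ and $B-P$ finishes the induction. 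Applying this to $D$ and $E$ gives $h^0(\mathcal{L})+h^1(\mathcal{L})\leq h^0(K)+1=g+1$, and eliminating $h^1(\mathcal{L})$ via Riemann--Roch ($h^0(\mathcal{L})-h^1(\mathcal{L})=\deg(\mathcal{L})-g+1$) yields $2h^0(\mathcal{L})\leq\deg(\mathcal{L})+2$, which is the inequality.

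The equality statement is the part I expect to be the main obstacle. Assuming $h^0(\mathcal{L})=1+\tfrac12\deg(\mathcal{L})$ with $\mathcal{L}\neq 0,K$, I would first note that $|\mathcal{L}|$ must be base point free, since removing a base point would produce an effective special divisor contradicting the inequality already established. Then $|\mathcal{L}|$ defines a morphism $\phi\colon X\to\mathbb{P}^r$ with $r=\tfrac12\deg(\mathcal{L})\geq 1$ and $\mathcal{O}_X(\mathcal{L})\cong\phi^*\mathcal{O}_{\mathbb{P}^r}(1)$, so $\deg\phi\cdot\deg\phi(X)=2r$; completeness of $|\mathcal{L}|$ forces $\phi(X)$ to be nondegenerate, hence $\deg\phi(X)\geq r$, whence $\deg\phi\leq 2$. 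If $\deg\phi=2$ then $\phi(X)$ is a nondegenerate curve of degree $r$ in $\mathbb{P}^r$, i.e. a rational normal curve; composing with its isomorphism to $\mathbb{P}^1$ exhibits $X$ as a double cover of $\mathbb{P}^1$, so $X$ is hyperelliptic, and pulling $\mathcal{O}(1)$ back through the $r$-th Veronese identifies $\mathcal{L}$ with $r$ times the hyperelliptic $g^1_2$. The case $\deg\phi=1$ can be excluded: then $\phi$ is birational onto a nondegenerate curve of degree $2r$ in $\mathbb{P}^r$, and Castelnuovo's genus bound together with $\deg(\mathcal{L})\leq 2g-2$ (from speciality) forces $\deg(\mathcal{L})=2g-2$ and $h^0(\mathcal{L})=g$, i.e. $\mathcal{L}\cong K$, contrary to hypothesis; alternatively one can avoid Castelnuovo by a more careful descending induction on $\deg(\mathcal{L})$, tracking how equality in the additivity estimate behaves under subtraction of a general point. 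Finally, for the converse direction a direct computation on a hyperelliptic curve shows that any multiple of the $g^1_2$ (and in particular $0$ and $K\sim(g-1)g^1_2$) does attain equality, so the characterization is complete.
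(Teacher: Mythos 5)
This statement is quoted in the paper as a classical result with a citation to Hartshorne; the paper gives no proof of it, so your argument can only be compared with the standard references. Your proof is correct and is essentially the classical one, assembled from two well-known routes. For the inequality you use the additivity estimate $h^0(A)+h^0(B)\leq h^0(A+B)+1$ for effective $A,B$; Hartshorne proves this by the dimension count on the finite-fibered sum map $|A|\times|B|\to|A+B|$, while you prove it by induction on $h^0(B)$, removing a point $P$ that is neither a base point of $|B|$ nor in the support of $A$. That induction is sound (the only cosmetic point is that $B-P$ need not itself be effective, only linearly equivalent to an effective divisor, which is all the statement uses since $h^0$ depends only on the class), and combining with Riemann--Roch gives the bound exactly as in the textbooks. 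For the equality case you argue via the base-point-free complete system, the induced morphism $\phi\colon X\to\mathbb{P}^r$, the minimal-degree bound for nondegenerate curves, the characterization of rational normal curves, and Castelnuovo's genus bound to exclude $\deg\phi=1$; this is the Griffiths--Harris/ACGH-style argument and it works (speciality gives $g\geq r+1$, Castelnuovo gives $g\leq r+1$, and then Riemann--Roch forces $\mathcal{L}\cong K$, a contradiction), though it is a heavier tool than Hartshorne's elementary equality analysis, which manufactures the $g^1_2$ directly without any projective-geometry input. Minor unstated checks (evenness of $\deg\mathcal{L}$ in the equality case, the implicit restriction to $g\geq 2$ so that the $g^1_2$ is unique, and the converse computation $h^0(r\cdot g^1_2)=r+1$ for $r\leq g-1$) are routine, so I see no genuine gap.
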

Let $C$ be a Teichm\"{u}ller curve lying in
$\Omega\mathcal{M}_g(m_1,...,m_k)$.  Let $P=(p'_1,...,p'_{2g-2})$ be
a permutation of $2g-2$ points

\[
 \overbrace{
   \underbrace{p_1,...,p_1}_\text{$m_1$},...,
   \underbrace{p_k,...,p_k}_\text{$m_k$}
  }^\text{$2g-2$},
\]
here the point $p_i$ is the intersection of the section $D_i$ with the
generic  fiber $F$ of the universal family $f:S\rightarrow C$.
\begin{definition}For any permutation $P=(p'_1,...,p'_{2g-2})$, we denote $H_1(P)=0$, For $j=2,...,g$, $H_j(P)=i$ if $h^0(p'_1+\cdots+p'_{i-1})=j-1$ and $h^0(p'_1+\cdots+p'_i)=j$.
\end{definition}
\begin{corollary}\label{hp}For any permutation $P=(p'_1,...,p'_{2g-2})$, we have $H_j(P)\geq 2j-2$ for $j\geq 2$. When
$j<g$, if the equality holds then  $C$ lies in the hyperelliptic
locus.
\end{corollary}
\begin{proof}By Clifford's Theorem $h^0(p'_1+\cdots+p'_i)\leq
1+\frac{\mathrm{deg}(p'_1+\cdots+p'_i)}{2}$. If $h^0(p'_1+\cdots+p'_i)=j$, then $$H_j(P)=i=\mathrm{deg}(p'_1+\cdots+p'_i)\geq 2(j-1),$$
we have $H_j(P)\geq 2j-2$. If the equality holds then the generic fiber is hyperelliptic curve, $C$ lies in the hyperelliptic locus.
\end{proof}

We also denote by $D'_i$ the section which intersect the gerneral fiber is $p'_i$. We use them to construct vector bundles $f_*\mathcal{O}(D'_1+\cdots+D'_i)$ for $1\leq
i\leq 2g-2$.

\begin{proposition}\label{pf}For any permutation $P=(p'_1,...,p'_{2g-2})$, we can construct a filtration
$$0\subset V'_1\subset V'_2\subset\cdots \subset V'_g=f_*\mathcal{O}(m_1D_1+\cdots+m_kD_k),$$
where $V'_j$ is a rank $j$ vector bundle and
$$V'_j=f_*\mathcal{O}(D'_1+\cdots+D'_{H_j(P)})=\cdots=f_*\mathcal{O}(D'_1+\cdots+D'_{H_{j+1}(P)-1}).$$
Moreover for $j\geq 2$, we have
$$V'_j/V'_{j-1}\subset \mathcal{O}_{D'_{H_j(P)}}(D'_1+\cdots+D'_{H_j(P)})$$
\end{proposition}
\begin{proof}Because
$$h^0(p'_1+\cdots+p'_{H_j(P)})=\cdots= h^0(p'_1+\cdots+p'_{H_{j+1}(P)-1})=j,$$
by lemma \ref{EE}, we have
$$V'_j=f_*\mathcal{O}(D'_1+\cdots+D'_{H_j(P)})=\cdots=f_*\mathcal{O}(D'_1+\cdots+D'_{H_{j+1}(P)-1}).$$
for $j\geq 2$, from the exact sequence
$$0\rightarrow f_*\mathcal{O}(D'_1+\cdots+D'_{H_j(P)-1})\rightarrow f_*\mathcal{O}(D'_1+\cdots+D'_{H_j(P)})\rightarrow \mathcal{O}_{D'_{H_j(P)}}(D'_1+\cdots+D'_{H_j(P)}),$$
we have
$$V'_j/V'_{j-1}\subset \mathcal{O}_{D'_{H_j(P)}}(D'_1+\cdots+D'_{H_j(P)})$$
\end{proof}
\begin{proposition}\label{df}There is a filtration of
$\overset{k}{\underset{i=1}{\oplus}}\overset{m_i}{\underset{j=1}{\oplus}}\mathcal{O}_{D_i}(jD_i)$:
$$0\subset V_1\subset V_2\subset\cdots \subset V_{2g-2}=\overset{k}{\underset{i=1}{\oplus}}\overset{m_i}{\underset{j=1}{\oplus}}\mathcal{O}_{D_i}(jD_i),$$
satisfying:
 1). $V_l/V_{l-1}$ is a summand $\mathcal{O}_{D_j}(dD_j)$,
 2). $\mathrm{deg}(V_l/V_{l-1})$ decreases in $l$.\\
From this filtration we can construct a special permutation
\begin{equation}\label{arrange}
P=(p'_1,p'_2,...,p'_{2g-2}),
\end{equation}
such that
\begin{equation}
V_l=\mathrm{grad}(HN(\mathcal{O}_{D'_1+\cdots+D'_l}(D'_1+\cdots+D'_l))).
\end{equation}
\end{proposition}
\begin{proof}
Consider the degree of each summand and let $V_l/V_{l-1}$ be some summand, we can easily construct this filtration of $\overset{k}{\underset{i=1}{\oplus}}\overset{m_i}{\underset{j=1}{\oplus}}\mathcal{O}_{D_i}(jD_i)$.

If $V_l/V_{l-1}$ equals $\mathcal{O}_{D_j}(dD_j)$, then let $p'_l=p_j$. Thus we construct a special permutation
$$P=(p'_1,p'_2,...,p'_{2g-2})$$

 Because $D^2_j< 0$, we have
$$\mathrm{deg}(\mathcal{O}_{D_j}(D_j))>\cdots>\mathrm{deg}(\mathcal{O}_{D_j}((d-1)D_j))>\mathrm{deg}(\mathcal{O}_{D_j}(dD_j)).$$
There are only $d-1$ $p_j$'s appearing before $p'_l$ in permutation $P$. We have $D'_l=D_j$ and
$$D'_1+\cdots+D'_{l-1}=(d-1)D_j+(\texttt{sum of divisors which not contain } D_j).$$
$$D'_1+\cdots+D'_l=dD_j+(\texttt{sum of divisors which not contain } D_j).$$
By lemma \ref{HN}
$$\mathrm{grad}(HN(\mathcal{O}_{dD_j}(dD_j)))/\mathrm{grad}(HN(\mathcal{O}_{(d-1)D_j}((d-1)D_j)))=\mathcal{O}_{D_j}(dD_j).$$
Combined with lemma \ref{SP}, we have
\begin{equation}\label{grad}
\mathrm{grad}(HN(\mathcal{O}_{\overset{l}{\underset{k=1}{\sum}}D'_k}(\overset{l}{\underset{k=1}{\sum}}D'_k)))/\mathrm{grad}(HN(\mathcal{O}_{\overset{l-1}{\underset{k=1}{\sum}}D'_k}(\overset{l-1}{\underset{k=1}{\sum}}D'_k)))=\mathcal{O}_{D_j}(dD_j).
\end{equation}
By induction we get:
$$V_l=\mathrm{grad}(HN(\mathcal{O}_{D'_1+\cdots+D'_l}(D'_1+\cdots+D'_l))).
$$
\end{proof}
\begin{lemma}We order the numbers in the set
$\{\frac{j}{m_l+1}|1\leq j\leq m_l,1\leq l\leq k\}$
increasing(counted with multiplicity) as $a_1\leq a_2\leq \cdot\cdot\cdot \leq a_{m_1+\cdots+m_k}$. $V_i$ is the vectror bundle constructed in Proposition \ref{df}. Then we have
$$\mathrm{deg}(V_n/V_{n-1})/\mathrm{deg}(\mathcal{L})=-a_n.$$
\end{lemma}
\begin{proof}By Proposition \ref{df}, $\mathrm{deg}(V_n/V_{n-1})$ is the $n$-th largest number of
$\{\mathrm{deg}(\mathcal{O}_{D_l}(jD_l))|1\leq j\leq m_l,1\leq l\leq k\}$. By equation \ref{intersection}, we have
$$\mathrm{deg}(\mathcal{O}_{D_l}(jD_l))/\mathrm{deg}(\mathcal{L})=jD^2_l/\mathrm{deg}(\mathcal{L})=-\frac{j}{m_l+1}.$$

So $-\mathrm{deg}(V_n/V_{n-1})/\mathrm{deg}(\mathcal{L})$ is the $n$-th smallest number of
$\{\frac{j}{m_l+1}|1\leq j\leq m_l,1\leq l\leq k\}$.
\end{proof}
In fact we have obtained an upper bound of the slope of each graded
quotient of the Harder-Narasimhan filtration of $f_*(\omega_{S/C})$
for Teichm\"{u}ller curves:

\begin{theorem}\label{ubhn}For a Teichm\"{u}ller
curve which lies in $\Omega\mathcal{M}_g(m_1,...,m_k)$. We order the numbers in the set
$\{\frac{j}{m_l+1}|1\leq j\leq m_l,1\leq l\leq k\}$
increasing(counted with multiplicity) as $a_1\leq a_2\leq \cdots \leq a_{m_1+\cdots+m_k}$.

Then we have $w_1=1$. For $2 \leq i\leq g$, there are
inequalities:
$$w_i\leq 1-a_{H_i(P)},$$
here $P$ is the special permutation \eqref{arrange} which satisfies $H_i(P)\geq 2i-2$.
\end{theorem}
\begin{proof}By Proposition \ref{df}, there is a filtration of
$\overset{k}{\underset{i=1}{\oplus}}\overset{m_i}{\underset{j=1}{\oplus}}\mathcal{O}_{D_i}(jD_i)$:
$$0\subset V_1\subset V_2\subset\cdots \subset V_{2g-2}=\overset{k}{\underset{i=1}{\oplus}}\overset{m_i}{\underset{j=1}{\oplus}}\mathcal{O}_{D_i}(jD_i),$$
satisfying:
 1). $V_l/V_{l-1}$ is a summand $\mathcal{O}_{D_j}(dD_j)$,
 2). $\mathrm{deg}(V_l/V_{l-1})$ decreases in $l$.\\
From this filtration we also construct a special permutation
$$P=(p'_1,p'_2,...,p'_{2g-2}),$$
such that
$$V_i=\mathrm{grad}(HN(\mathcal{O}_{D'_1+\cdots+D'_i}(D'_1+\cdots+D'_i))).
$$

By Proposition \ref{pf}, use vector bundles $f_*\mathcal{O}(D'_1+\cdots+D'_i)$, we also
construct a filtration
\begin{equation}\label{filtration}
0\subset V'_1\subset V'_2\subset\cdots \subset
V'_g=f_*\mathcal{O}(m_1D_1+\cdots+m_kD_k),
\end{equation}
such that
$$ V'_j=f_*\mathcal{O}(D'_1+\cdots+D'_{H_j(P)})=\cdots=f_*\mathcal{O}(D'_1+\cdots+D'_{H_{j+1}(P)-1}).$$
By equation \eqref{grad}
$$\mathcal{O}_{D'_{H_j(P)}}(D'_1+\cdots+D'_{H_j(P)}))=V_{H_j(P)}/V_{H_j(P)-1}.$$
By Proposition \ref{pf}, for $j\geq 2$,
$$V'_j/V'_{j-1}\subset\mathcal{O}_{D'_{H_j(P)}}(D'_1+\cdots+D'_{H_j(P)})=V_{H_j(P)}/V_{H_j(P)-1}.$$

Now we construct a new filtration:
$$0\subset \mathcal{O}\subset\mathcal{O}\oplus V_{H_2(P)}/V_{H_2(P)-1}\subset\cdots \subset
\mathcal{O}\oplus\overset{g}{\underset{j=2}{\oplus}}V_{H_j(P)}/V_{H_j(P)-1}.$$

This filtration and the filtration \eqref{filtration} satisfy the condition of lemma \ref{control}, so for $j\geq 2$, we have
$$\mu_i(f_*\mathcal{O}(m_1D_1+\cdots+m_kD_k))\leq
\mathrm{deg}(V_{H_i(P)}/V_{H_i(P)-1}).$$
 So for $j\geq 2$, we get
$$w_i=\mu_i(f_*(\omega_{S/C}))/\mathrm{deg}(\mathcal{L})=1+\mu_i(f_*\mathcal{O}(m_1D_1+\cdots+m_kD_k))/\mathrm{deg}(\mathcal{L})\leq
1-a_{H_i(P)}.$$
\end{proof}
\begin{example}\label{ex}
For a Teichm\"{u}ller
curve which lies in $\Omega\mathcal{M}_5(6,1,1)$. We order the numbers in the set
$$\{\frac{1}{7},\frac{2}{7},\frac{3}{7},\frac{1}{2},\frac{1}{2},\frac{4}{7},\frac{5}{7},\frac{6}{7}\}$$
increasing(counted with multiplicity) as $a_1\leq a_2\leq \cdots \leq a_8$. For any special permutation $P$,
then 
$$H_2(P)\geq 2, H_3(P)\geq 4, H_4(P)\geq 6, H_5(P)=8.$$
$$w_2\leq 1-a_{H_2(P)}\leq 1-a_2=\frac{5}{7},$$
$$w_3\leq 1-a_{H_3(P)}\leq 1-a_4=\frac{1}{2},$$
$$w_4\leq 1-a_{H_4(P)}\leq 1-a_6=\frac{3}{7},$$
$$w_5\leq 1-a_{H_5(P)}\leq 1-a_8=\frac{1}{7}.$$

If the general fiber is non hyperelliptic, for any special permutation $P$, by Clifford Theorem or Corollary \ref{pf}, then
$$H_2(P)>2, H_3(P)>4, H_4(P)>6, H_5(P)=8.$$
$$w_2\leq 1-a_{H_2(P)}\leq 1-a_3=\frac{4}{7},$$
$$w_3\leq 1-a_{H_3(P)}\leq 1-a_5=\frac{1}{2},$$
$$w_4\leq 1-a_{H_4(P)}\leq 1-a_7=\frac{2}{7},$$
$$w_5\leq 1-a_{H_5(P)}\leq 1-a_8=\frac{1}{7}.$$
\end{example}
\begin{theorem} \label{main} The sum of Lyapunov exponents of a Teichm\"{u}ller curve in $\Omega\mathcal{M}_g(m_1,...,m_k)$ satisfies the inequality
$$L(C)\leq \frac{g+1}{2}.$$
Furthermore, equality occurs if and only if it lies in the
hyperelliptic locus induced from
$\mathcal{Q}(2k_1,...,2k_n,-1^{2g+2})$ or it is a special
Teichm\"{u}ller curve in $\Omega\mathcal{M}_g(1^{2g-2})$.
\end{theorem}
\begin{proof} By Theorem \ref{ubhn} and Corollary \ref{hp}
\begin{align*}
L(C)=w_1+\overset{g}{\underset{j=2}{\sum}} w_i &\leq g -\overset{g}{\underset{j=2}{\sum}} a_{H_j(P)}\leq g-\overset{g}{\underset{j=2}{\sum}}a_{2j-2} \leq g-\overset{g}{\underset{j=2}{\sum}}(a_{2j-3}+a_{2j-2})/2\\
&=g-\frac{1}{2}\overset{2g-2}{\underset{j=1}{\sum}} a_{j}=g-\frac{1}{2}\overset{k}{\underset{l=1}{\sum}} \overset{m_l}{\underset{j=1}{\sum}} \frac{j}{m_l+1}\\
&=g-\frac{1}{4} \overset{k}{\underset{i=1}{\sum}} m_i=\frac{g+1}{2}.
\end{align*}

When the inequality becomes equal, we have
$a_{H_j(P)}=a_{2j-2}$ and $a_{2j-1}=a_{2j}$. If
$$a_1=a_2=\cdots=a_{2k-2}<a_{2k-1}=a_{2k},$$ then $a_{H_k(P)}=a_{2k-2}$ means $H_k(P)=2k-2$.
Thus by Clifford's theorem, if its generic fibers is not hyperelliptic, then
$a_1=a_2=\cdots=a_{2g-2}$, which means $m_1=\cdots=m_{2g-2}=1$. It is a special Teichm\"{u}ller curve in $\Omega\mathcal{M}_g(1^{2g-2})$.

The hyperelliptic locus in a stratum
$\Omega\mathcal{M}_g(m_1,...,m_k)$ induces from a stratum
$\mathcal{Q}(d_1,...,d_k)$ satisfying $d_1+\cdots+d_n=-4$. A
singularity of order $d_i$ of $q$ give rise to two zeros of degree
$m=d_i/2$ when $d_i$ is even, single zero of degree $m=d+1$ when $d$
is odd.
$$\underset{d_j \mathrm{\ odd}}{\sum} (d_j+1)+\underset{d_j\mathrm{\ even}}{\sum}d_j=2g-2.$$
By the formula of sums for the hyperelliptic locus in \cite{EKZ}\cite{YZ},
$$L(C)=\frac{1}{4}\underset{d_j
\mathrm{\ odd}}{\sum}\frac{1}{d_j+2}\leq \frac{1}{4}\underset{d_j
\mathrm{\ odd}}{\sum} 1=\frac{g+1}{2}.$$ a Teichm\"{u}ller curve in
the hyperelliptic locus satisfies $L(C)=\frac{g+1}{2}$ if and only
if it is induced from $\mathcal{Q}(2k_1,...,2k_n,-1^{2g+2})$.
\end{proof}

\begin{remark}Dawei Chen and M. M\"{o}ller (\cite{CM12}) have constructed a Teichm\"{u}ller curve
$C\in \Omega\mathcal{M}_3(1,1,1,1)$ with $L(C)=2$, but it is not
hyperelliptic: the square tiled surface given by the permutations
$$(\pi_r=(1234)(5)(6789),\pi_{\mu}=(1)(2563)(4897))$$

  They also  have obtained
a bound by using Cornalba-Harris-Xiao's slope inequality
(\cite{Mo13}):
$$L(C) \leq \frac{3g}{(g-1)}\kappa_{\mu}=\frac{g}{4(g-1)}\overset{k}{\underset{i=1}{\sum}}\frac{m_i(m_i+2)}{m_i+1}.$$
When $k\geq 4$, our upper bound better than this upper bound because the right side
$$=\frac{g}{4(g-1)}\overset{k}{\underset{i=1}{\sum}}m_i+\frac{g}{4(g-1)}\overset{k}{\underset{i=1}{\sum}}\frac{m_i}{m_i+1}>\frac{g}{2}+\frac{1}{4}k\frac{1}{2}\geq \frac{g+1}{2}.$$
\end{remark}

We only present an example to explain the general principle on how
to improve the upper bound when we know more information about
Weierstrass semigroups of general fibers.
\begin{corollary}
 A Teichm\"{u}ller curve which lies in the non
hyperelliptic  locus of $\mathcal{M}_4(2,2,1,1)$ satisfies
$$L(C)\leq 13/6.$$
\end{corollary}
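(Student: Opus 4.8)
The plan is to rerun the estimate from the proof of Theorem \ref{main} in the case $g=4$, but to keep track of the quantities $H_j(P)$ more carefully, exploiting the hypothesis that the generic fiber is a non-hyperelliptic curve of genus $4$. Recall from that proof that $L(C)=4+b_2+b_3+b_4$, where $b_j=\deg(V'_j/V'_{j-1})/\deg(\mathcal{L})$ for the filtration \eqref{filtration}, and that Lemma \ref{UP} together with Clifford's theorem yields $b_j\le a_{H_j(P)}$. For $\Omega\mathcal{M}_4(2,2,1,1)$ we have $m_1=m_2=2$ and $m_3=m_4=1$, so the multiset $\{-j/(m_i+1)\}$ is $\{-1/3,-1/3,-1/2,-1/2,-2/3,-2/3\}$; hence $a_1=a_2=-1/3$, $a_3=a_4=-1/2$ and $a_5=a_6=-2/3$. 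With only $H_j(P)\ge 2j-2$ this would give the general bound $5/2$, so the point is to improve the lower bounds for $H_2(P)$ and $H_3(P)$.

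Next I would bound the $H_j(P)$ from below using the non-hyperelliptic hypothesis and the equality discussion in Clifford's theorem (Theorem \ref{clifford}). If $H_2(P)=2$, then $h^0(p'_1+p'_2)=2$, i.e. the generic fiber carries a $g^1_2$ and is hyperelliptic, contrary to assumption; hence $H_2(P)\ge 3$ and $b_2\le a_3=-\tfrac12$. If $H_3(P)=4$, then $h^0(p'_1+\cdots+p'_4)=3$ is a $g^2_4$, which attains equality in $h^0\le 1+\tfrac12\deg$ with $0<\deg=4<2g-2=6$ and with the divisor neither $0$ nor $K$, so again the generic fiber is hyperelliptic; hence $H_3(P)\ge 5$ and $b_3\le a_5=-\tfrac23$. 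Finally $H_4(P)=2g-2=6$ always, since $p'_1+\cdots+p'_6=\sum_i m_iD_i|_F$ is a canonical divisor on the fiber and $|K_F|$ is base-point-free, so deleting the last point drops $h^0$ from $4$ to $3$; thus $b_4\le a_6=-\tfrac23$.

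Combining these, $L(C)\le 4+a_3+a_5+a_6=4-\tfrac12-\tfrac23-\tfrac23=\tfrac{13}{6}$, which is the assertion. The only input beyond the general machinery already set up is the translation of the non-hyperelliptic hypothesis into the inequalities $H_2(P)\ge 3$ and $H_3(P)\ge 5$, i.e. excluding a $g^1_2$ or a $g^2_4$ on the generic fiber; this is precisely the equality case of Clifford's theorem for $0<\deg<2g-2$, so there is no genuine obstacle here — the argument is bookkeeping once one identifies which partial sums $p'_1+\cdots+p'_i$ of the special permutation \eqref{arrange} would have to be special divisors contradicting non-hyperellipticity. (Note that a $g^1_3$ is allowed on a non-hyperelliptic genus-$4$ curve, so $H_2(P)=3$ is not excluded, but since $a_3=a_4=-\tfrac12$ and $a_5=a_6=-\tfrac23$ this does not affect the final numerical bound.)
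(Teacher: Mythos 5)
Your proof is correct and takes essentially the same route as the paper: both use the bound $b_j\le a_{H_j(P)}$ (equivalently $w_j\le 1+a_{H_j(P)}$ from Lemma \ref{ubhn}/Proposition \ref{pa}) together with the equality case of Clifford's theorem to force $H_2(P)\ge 3$, $H_3(P)\ge 5$, $H_4(P)=6$, and then sum $4+a_3+a_5+a_6=13/6$. (The paper's list ``$-1/2,-1/3,-1/3$'' is a typo for $-1/2,-2/3,-2/3$; the values you use are the correct ones.)
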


\begin{proof} Because $m_1=2,m_2=2,m_3=1,m_4=1$, by Theorem \ref{ubhn}, $a_n$ is the $n$-th number of
$$\{1/3,1/3,1/2,1/2,2/3,2/3\}.$$
 Because the general fiber is non hyperelliptic curve, by Clifford Theorem or Corollary \ref{pf},
 $$H_2(P)>2,H_3(P)>4,H_4(P)=6,$$
 so we can choose the third, the fifth and the sixth
 element of $a_n:1/2,2/3,2/3$. Finally we have
$$L(C)\leq1+\sum^4_{i=2}(1-a_{H_i(P)})=13/6.$$
\end{proof}
This result has appeared in \cite{CM12}.

\section{Applications}
The Harder-Narasimhan filtration always give an upper bound of
degrees of any sub vector bundles, especially those related to the special partial
sums of Lyapunov exponents.

\begin{proposition}\label{pa}If the VHS over the Teichm\"uller curve $C$ contains a sub-VHS $\mathbb{W}$
of rank $2k$, then the sum of the $k$ corresponding non-negative
Lyapunov exponents is the sum of  $w_{i_1},...,w_{i_k}$ (where $i_j$
are different to each other) and satisfies
$$\overset{k}{\underset{i=1}{\sum}}\lambda^{\mathbb{W}}_i\leq 1+\sum^k_{i=2}(1-a_{H_i(P)}).$$
\end{proposition}
\begin{proof}$\mathbb{W}^{(1,0)}$ is summand of $f_*(\omega_{S/C})$ by Deligne's semisimplicity theorem.  The slope $\mu_j(\mathbb{W}^{(1,0)})$ is equal to $\mu_{i_j}(f_*(\omega_{S/C}))$ for some
$j$ by lemma \ref{directsum}, here we can choose $i_j$ such that
each other is different.
$$\overset{k}{\underset{i=1}{\sum}}\lambda^{\mathbb{W}}_i=\frac{2\mathrm{deg} \mathbb{W}^{(1,0)}}{2g(C)-2+|\Delta|}= \frac{\overset{k}{\underset{j=1}{\sum}}\mu_j(\mathbb{W}^{(1,0)})}{\mathrm{deg}(\mathcal{L})}=\overset{k}{\underset{j=1}{\sum}}\mu_{i_j}(f_*(\omega_{S/C}))/\mathrm{deg}(\mathcal{L})=\overset{k}{\underset{j=1}{\sum}}w_{i_j}.$$
By Theorem \ref{ubhn} and $a_i$ decrease in $i$,
$$\overset{k}{\underset{i=1}{\sum}}\lambda^{\mathbb{W}}_i=\overset{k}{\underset{j=1}{\sum}}w_{i_j}\leq 1+\sum^k_{i=2}(1-a_{H_{i_j}(P)})\leq 1+\sum^k_{i=2}(1-a_{H_i(P)}).$$
\end{proof}
For individual Lyapunov exponents, due to the lack of algebraic
interpretation, we will make the following assumption:

\begin{assumption}\label{assumption}$f_*(\omega_{S/C})$ equals $(\overset{h}{\underset{i=1}{\oplus}} L_i) \oplus W$, here $L_i$ are line bundles
such that the $i$-th Lyapunov exponent satisfies the equality:
$$\lambda_i=\Big\{
 \begin{array}{cc}
\mathrm{deg}(L_i)/\mathrm{deg}(\mathcal{L}) &1\leq i\leq h \\
0 &h< i\leq g.
   \end{array}
$$
\end{assumption}
There are many examples satisfying this assumption: triangle groups
\cite{BM10}\cite{Wr2}, square tiled cyclic covers \cite{EKZ11}\cite{FMZ},
square tiled abelian covers \cite{Wr1}, some wind-tree models
\cite{DHL}, and algebraic primitives \cite{BM09}.

Our estimate on the slopes of the Harder-Narasimhan filtration will
give the following upper bound for individual Lyapunov exponents:

\begin{proposition}\label{single}For a Teichm\"{u}ller curve which
satisfies the assumption \ref{assumption} and lies in
$\Omega\mathcal{M}_g(m_1,...,m_k)$. We order the numbers in the set
$\{\frac{j}{m_l+1}|1\leq j\leq m_l,1\leq l\leq k\}$
increasing(counted with multiplicity) as $a_1\leq a_2\leq \cdots \leq a_{m_1+\cdots+m_k}$.

For $2 \leq i\leq g$, the $i$-th Lyapunov exponent
satisfies the inequality:
$$\lambda_i \leq 1-a_{H_i(P)},$$
here $P$ is the special permutation
\eqref{arrange} which satisfies $H_i(P)\geq 2i-2$.
\end{proposition}
\begin{proof}The assumption \ref{assumption} and the lemma \ref{directsum} give us

$$\mathrm{grad}(HN(f_*(\omega_{S/C})))=(\overset{h}{\underset{i=1}{\oplus}} L_i) \oplus \mathrm{grad}(HN(W)),$$

so there are different $j_i$ such that $\lambda_1
=w_{j_1}\geq\lambda_2 =w_{j_2}\geq...\geq\lambda_h=w_{j_h}$. By
Theorem \ref{ubhn}, we have

$$\lambda_i=w_{j_i}\leq w_i\leq 1-a_{H_i(P)}.$$

\end{proof}
The equality can be reached for an algebraic primitive
Teichm\"{u}ller curve lying in the hyperelliptic locus induced from
$\mathcal{Q}(2k_1,...,2k_n,-1^{2g+2})$.

\paragraph{\textbf{Abelian covers}}
 The Lyapunov spectrum has been computed for triangle groups (\cite{BM10}\cite{Wr2}), square tiled cyclic covers (\cite{EKZ11} \cite{FMZ}) and
  square tiled abelian covers (\cite{Wr1}). They all satisfy the assumption
\ref{assumption}. Here we give the description of square tiled
cyclic covers:

Consider an integer $N\geq 1$ and a quadruple of integers
$(a_1,a_2,a_3,a_4)$ satisfying the following conditions:

$$0<a_i\leq N;\quad \mathrm{gcd}(N,a_1,...,a_4)=1;\quad\sum^4_{i=1}a_i\equiv 0 \quad(mod\quad N).$$

Let $z_1,z_2,z_3,z_4\in \mathbb{C}$ be four distinct points. By
$M_N(a_1,a_2,a_3,a_4)$ we denote the closed connected nonsingular
Riemann surface obtained by normalization of the one defined by the
equation

$$w^N=(z-z_1)^{a_1}(z-z_2)^{a_2}(z-z_3)^{a_3}(z-z_4)^{a_4}.$$

Varying the cross-ratio $(z_1,z_2,z_3,z_4)$ we
obtain the moduli curve $\mathcal{M}_{(a_i),N}$. As an abstract
curve it is isomorphic to $\mathcal{M}_{0,4}\simeq
\mathbb{P}^1-\{0,1,\infty\}$; more strictly speaking, it should be
considered as a stack. The canonical generator $T$ of the group of
deck transformations induces a linear map $T^*:H^{1,0}(X)\rightarrow
H^{1,0}(X)$. $H^{1,0}(X)$ admits a splitting into a direct sum of
eigenspaces $V^{1,0}(k)$ of $T^*$ and satisfies the assumption
\ref{assumption}. (cf. Theorem $2$ in \cite{EKZ11})

For even $N$, $M_N(N-1,1,N-1,1)$ has Lyapunov spectrum
(\cite{EKZ11}):

$$\{\frac{2}{N},\frac{2}{N},\frac{4}{N},\frac{4}{N},...,\frac{N-2}{N},\frac{N-2}{N},1\}.$$

\begin{corollary} $M_N(N-1,1,N-1,1)$ lies
in the hyperelliptic locus which induced from $\mathcal
{Q}(N-2,N-2,-1^{2N})$.
\end{corollary}
\begin{proof}
By  the genus formular
$g=N+1-\frac{1}{2}\sum^4_{i=1}\mathrm{gcd}(a_i,N)$ and
$$L(C)=\frac{2}{N}+\frac{2}{N}+\frac{4}{N}+\frac{4}{N}+\cdots+\frac{N-2}{N}+\frac{N-2}{N}+1=\frac{g+1}{2}.$$
By the Theorem \ref{main}, $M_N(N-1,1,N-1,1)$ lies
in the hyperelliptic locus which induced from $\mathcal
{Q}(N-2,N-2,-1^{2N})$.
\end{proof}
\paragraph{\textbf{Algebraic primitives}} The variation of Hodge structures
over a Teichm\"{u}ller curve decomposes into sub-VHS
\begin{equation}\label{VHS}
R^1f_*\mathbb{C}=(\oplus^r_{i=1} \mathbb{L}_i)\oplus \mathbb{M}
\end{equation}
Here $\mathbb{L}_i$ are rank-2 subsystems, maximal Higgs
$\mathbb{L}^{1,0}_1\simeq \mathcal{L}$ for $i=1$, non-unitary but
not maximal Higgs  for $i\neq 1$ (\cite{Mo11}). It is obvious that
the Teichm\"{u}ller curve satisfies the assumption \ref{assumption}
if $r\geq g-1$.

If $r=g$, it is called algebraic primitive Teichm\"{u}ller curves.
 We know there are only finite algebraic primitive Teichm\"{u}ller
 curves
in the stratum $\Omega\mathcal{M}_3(3,1)$ by M\"{o}ller and
Bainbridge in \cite{BM09}, and they conjecture that the algebraic
primitive Teichm\"{u}ller curves in each stratum is finite
(\cite{Mo13}).

\begin{corollary}Algebraic
primitive Teichm\"{u}ller
 curves
in the stratum $\Omega\mathcal{M}_3(3,1)$ has Lyapunov spectrum
$\{1,\frac{2}{4},\frac{1}{4}\}$.
\end{corollary}
\begin{proof}
 By Proposition \ref{nonva}.
\end{proof}
\paragraph{\textbf{Wind-tree models}}A wind-tree model or the infinite
billiard table is defined as:
$$T(a,b):=\mathbb{R}^2\setminus\underset{m,n\in\mathbb{Z}}{\bigcup}[m,m+a]\times[n,n+b]$$
with $0<a,b<1$. Denote by $\phi^{\theta}_t:T(a,b)\rightarrow T(a,b)$
the billiard flow: for a point $p\in T(a,b)$, the point
$\phi^{\theta}_t$ is the position of a particle after time $t$
starting from position $p$ in direction $\theta$.

\begin{theorem}[\cite{DHL}]Let $d(.,.)$ be the Euclidean distance on $\mathbb{R}^2$.\begin{itemize}
 \item (Case 1) If
$a$ and $b$ are rational numbers or can be written as
$1/(1-a)=x+y\sqrt{D},1/(1-b)=(1-x)+y\sqrt{D}$ with $x,y\in
\mathbb{Q}$ and $D$ a positive square-free integer then for Lebesgue
almost all $\theta$ and every point $p$ in $T(a,b)$.
\item (Case 2) For Lebesgue-almost all $(a,b)\in (0,1)^2$, Lebesgue-almost
all $\theta$ and every point $p$ in $T(a,b)$ (with an infinite
forward orbit):
\begin{equation}\label{wind}
\underset{T\rightarrow +\infty}{\mathrm{lim}\,} \mathrm{sup}\frac{\mathrm{log}\,
d(p,\phi^{\theta}_T(p))}{\mathrm{log}\,T}=\frac{2}{3}.
\end{equation}\end{itemize}
\end{theorem}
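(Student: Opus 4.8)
The plan is to follow Delecroix--Hubert--Leli\`evre \cite{DHL}: unfold the billiard, reduce the polynomial diffusion rate to a Lyapunov exponent of the Kontsevich--Zorich cocycle, and then evaluate that exponent with the algebraic tools of the previous sections.

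\emph{Unfolding.} Since the obstacles of $T(a,b)$ are rectangles, the billiard flow $\phi^{\theta}_t$ in a fixed direction $\theta$ unfolds, via the reflection group $(\mathbb{Z}/2)^2$, to the straight--line flow $F^{\theta}_t$ on a translation surface $\widehat{X}(a,b)$; as the obstacles form a $\mathbb{Z}^2$--periodic array, $\widehat{X}(a,b)$ is a $\mathbb{Z}^2$--cover of a \emph{compact} translation surface $X(a,b)$ of genus $5$ lying in $\Omega\mathcal{M}_5(2,2,2,2)$ and carrying a $(\mathbb{Z}/2)^2$ group of translation automorphisms (each of the four reflex corners of the obstacle becomes a zero of order $2$). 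Up to a bounded additive error, $d(p,\phi^{\theta}_T(p))$ equals the Euclidean norm of the $\mathbb{Z}^2$--valued winding number of the $F^{\theta}_T$--orbit, i.e.\ of the pairing of that orbit with the two classes $c_1,c_2\in H^1(X(a,b),\mathbb{Z})$ defining the cover; so it suffices to control the growth in $T$ of $\langle[\mathrm{traj}_T],c_i\rangle$.

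\emph{Diffusion rate $=$ Lyapunov exponent.} For a.e.\ $\theta$ the flow $F^{\theta}_t$ is ergodic for the natural infinite--area measure on $\widehat{X}(a,b)$ (a statement that, for the $\mathbb{Z}^2$--cover at hand, is itself part of the analysis in \cite{DHL}), and the Forni--Zorich deviation--of--ergodic--averages spectrum, in the $\mathbb{Z}^d$--cover form of \cite{DHL} combined with Oseledets genericity of a.e.\ direction (Chaika--Eskin), gives: for a.e.\ $\theta$ and \emph{every} $p$ with infinite forward orbit, $\limsup_{T\to\infty}\frac{\log d(p,\phi^{\theta}_T(p))}{\log T}=\nu$, where $\nu$ is the top Lyapunov exponent of the Kontsevich--Zorich cocycle restricted to the rank--two local subsystem $\mathbb{L}\subset H^1$ spanned by $c_1,c_2$, over the $GL_2^+(\mathbb{R})$--orbit closure $\mathcal{N}$ of $X(a,b)$. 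In Case 1 one identifies $\mathcal{N}$ directly --- for rational $(a,b)$, $X(a,b)$ is square--tiled and $\mathcal{N}$ is an arithmetic Teichm\"uller curve, while the indicated quadratic one--parameter families are handled by Wright's real--multiplication and cylinder--deformation results --- and in Case 2 one invokes Eskin--Mirzakhani--Mohammadi to see that for a.e.\ $(a,b)$ the orbit closure $\mathcal{N}$ is as large as the $(\mathbb{Z}/2)^2$--symmetry allows; in all of these cases $\mathbb{L}$ is $\mathcal{N}$--invariant and non--unitary, and --- this is the rigidity point --- isomorphic, as a local system over $\mathcal{N}$, to one and the same rank--two piece.

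\emph{The computation and the main obstacle.} It remains to show $\nu=2/3$. The piece $\mathbb{L}$, together with its complex conjugate, forms a sub--VHS of even rank whose $(1,0)$--part is a summand of the Hodge bundle by Deligne semisimplicity, so Theorem \ref{sumly} and Lemma \ref{directsum} identify $\nu$ with one of the $w_i$ of a Teichm\"uller curve uniformizing $\mathcal{N}$ (or a finite cover of it). In the archetypal rational--parameter case, $\mathbb{L}$ is the rank--two piece realizing the second Lyapunov exponent of the cyclic cover $M_6(5,1,5,1)$ of the Remark above, which is the genus--$5$ surface in $\Omega\mathcal{M}_5(2,2,2,2)$ lying in the hyperelliptic locus induced from $\mathcal{Q}(4,4,-1^{12})$ and realizing the equality $L(C)=3=(g+1)/2$ of Theorem \ref{main}; by Theorem \ref{YZ} its Lyapunov spectrum is $\{1,\tfrac23,\tfrac23,\tfrac13,\tfrac13\}$, so $\nu=2/3$, and by the rigidity of $\mathbb{L}$ the value $\nu=2/3$ persists over every $\mathcal{N}$ occurring in Cases 1 and 2; substituting into the displayed limit yields \eqref{wind}. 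The genuinely hard part is not this computation but the preceding step: transporting the Forni--Zorich deviation spectrum to the non--compact $\mathbb{Z}^2$--cover (controlling cusp excursions, and upgrading ``a.e.\ $\theta$'' to ``every $p$''), and, for Case 2, pinning down $\mathcal{N}$ for a.e.\ $(a,b)$ while ruling out that the symmetry degenerates $\mathbb{L}$ to a unitary local system; once $\mathcal{N}$ is known, the remainder is the routine machinery of Sections 3--5.
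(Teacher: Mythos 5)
First, note that the paper itself gives no proof of this statement: it is quoted from \cite{DHL}, and the surrounding text only uses two of its consequences (that the wind-tree curve satisfies Assumption \ref{assumption} and that \eqref{wind} is equivalent to $\lambda_2=\tfrac23$), while the paper's own machinery is only claimed to reprove the upper bound $\lambda_2\le \tfrac23$ via Proposition \ref{single}. So your proposal has to stand as a reconstruction of the Delecroix--Hubert--Leli\`evre argument, and as such it has a genuine gap at its central step, the evaluation $\nu=\tfrac23$. Your mechanism for this is a ``rigidity'' transport: compute the exponent on one square-tiled example ($M_6(5,1,5,1)$) and claim it persists to every orbit closure $\mathcal{N}$ arising in Cases 1 and 2 because the rank-two piece is ``the same local system.'' No such principle exists: Lyapunov exponents are invariants of the $SL_2(\mathbb{R})$-invariant measure, not of the fibrewise (or even local-system) isomorphism type of a subbundle, and different parameters $(a,b)$ give different Teichm\"uller curves/orbit closures with a priori different exponents. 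The alternative route you indicate through this paper is circular: the identification of $X(a,b)$ with the hyperelliptic locus induced from $\mathcal{Q}(4,4,-1^{12})$ is deduced in the paper from $L(C)=\tfrac{g+1}{2}=3$, which is known only because the spectrum $\{1,\tfrac23,\tfrac23,\tfrac13,\tfrac13\}$ is imported from \cite{DHL} --- exactly the statement to be proved; by itself, Lemma \ref{ubhn}/Proposition \ref{single} gives only $\lambda_2\le\tfrac23$, never the equality. What actually closes this gap in \cite{DHL} is a non-varying computation: the two winding classes $c_1,c_2$ lie in the two $2$-dimensional isotypic pieces $E^{+-},E^{-+}$ of the $(\mathbb{Z}/2)^2$-action (these pieces, not the span of $c_1,c_2$, are the monodromy-invariant subsystems; the span of the two classes is not a local subsystem, contrary to your ``rank-two $\mathbb{L}$ spanned by $c_1,c_2$''), and the Eskin--Kontsevich--Zorich formula applied to the corresponding quotient quadratic differentials evaluates the top exponent of each piece to $\tfrac23$ for \emph{every} $SL_2(\mathbb{R})$-invariant ergodic probability measure supported on the wind-tree locus. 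This is precisely why both Case 1 and Case 2 follow without classifying orbit closures; invoking Eskin--Mirzakhani--Mohammadi in Case 2 identifies $\mathcal{N}$ but computes no exponent, so it cannot substitute for this step.

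A second, smaller but real, error: you assert that ergodicity of the straight-line flow on the infinite $\mathbb{Z}^2$-cover for a.e.\ $\theta$ ``is itself part of the analysis in \cite{DHL}.'' It is not, and it is in fact false for a.e.\ direction (Fraczek and Ulcigrai proved generic non-ergodicity of the wind-tree flow); the DHL argument deliberately avoids it, controlling instead the deviation of the holonomy pairings $\langle[\mathrm{traj}_T],c_i\rangle$ on the compact quotient. Finally, the passage from ``a.e.\ $\theta$, a.e.\ starting point'' to ``a.e.\ $\theta$, \emph{every} point with infinite orbit,'' and the treatment of the Veech (lattice) parameters in Case 1, are substantive parts of \cite{DHL} that your sketch flags as hard but does not supply; as written, the proposal proves at most the upper bound $\lambda_2\le\tfrac23$ already contained in this paper.
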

We are interested in the case $1$ because it is related to
Teichm\"{u}ller curves. By the Katok-Zemliakov construction, the
billiard flow can be replaced by a linear flow on a(non
compact)translation surface which is made of four copies of $T(a,b)$
that we denote $X_{\infty}(a,b)$. The surface $X_{\infty}(a,b)$ is
$\mathbb{Z}^2$ -periodic and we denote by $X(a,b)$ the quotient of
$X_{\infty}(a,b)$ under the $\mathbb{Z}^2$ action.

The surface $X(a,b)$ is a covering (with Deck group
$\mathbb{Z}/2\times \mathbb{Z}/2$) of the genus $2$ surface
$L(a,b)\in \Omega\mathcal{M}_2(2)$ which is called L-shaped surface
(\cite{Ca} \cite{Mc03}). The orbit of $X(a,b)$ for the
Teichm\"{u}ller flow belongs to the moduli space
$\Omega\mathcal{M}_5(2,2,2,2)$.

The Teichm\"{u}ller curve generated by the surface $X(a,b)$
satisfies the assumption \ref{assumption} because there is an
$SL_2(\mathbb{R})$-equivalent splitting of the Hodge bundle. Its
Lyapunov spectrum is
$\{1,\frac{2}{3},\frac{2}{3},\frac{1}{3},\frac{1}{3}\}$, the
equation \eqref{wind} is equivalence to say that
$\lambda_2=\frac{2}{3}$ (\cite{DHL}).

\begin{corollary}
$X(a,b)$ is lies in the
hyperelliptic locus which induced from $\mathcal {Q}(4,4,-1^{12})$.
\end{corollary}
\begin{proof}
 By the Theorem \ref{main}, $X(a,b)$ is lies in the
hyperelliptic locus which induced from $\mathcal {Q}(4,4,-1^{12})$,
because $L(C)$ equal $\frac{g+1}{2}$.
\end{proof}
In fact, a Teichm\"{u}ller curve which satisfies the assumption
\ref{assumption} and lies in $\Omega\mathcal{M}_5(2,2,2,2)$
satisfies $\lambda_2\leq\frac{2}{3}$ by the Proposition
\ref{single}.
\paragraph{\textbf{Non-varying strata}}
Recently, there are many progresses about the phenomenon that the
sum of Lyapunov exponents is non varying in some strata
(\cite{CM12}\cite{CM14}\cite{YZ}). The following proposition is a an
immediate corollary of the Theorem \ref{YZ}.
\begin{proposition}\label{nonva}For a Teichm\"{u}ller curve which
satisfies the assumption \ref{assumption} and lies in hyperelliptic
loci or one of the following strata:\\
$\overline{\Omega\mathcal{M}}_3(4),\overline{\Omega\mathcal{M}}_3(3,1),\overline{\Omega\mathcal{M}}^{odd}_3(2,2),\overline{\Omega\mathcal{M}}_3(2,1,1)$\\
$\overline{\Omega\mathcal{M}}_4(6),\overline{\Omega\mathcal{M}}_4(5,1),\overline{\Omega\mathcal{M}}^{odd}_4(4,2),\overline{\Omega\mathcal{M}}^{non-hyp}_4(3,3),\overline{\Omega\mathcal{M}}^{odd}_4(2,2,2),\overline{\Omega\mathcal{M}}_4(3,2,1)$\\
$\overline{\Omega\mathcal{M}}_5(8),\overline{\Omega\mathcal{M}}_5(5,3),\overline{\Omega\mathcal{M}}^{odd}_5(6,2)$.\\
The $i$-th Lyapunov exponent $\lambda_i$ equals the $w_i$ which is
computed in the Theorem \ref{YZ}.
\end{proposition}
\begin{proof}The assumption \ref{assumption} and the lemma \ref{directsum} give us

$$\mathrm{grad}(HN(f_*(\omega_{S/C})))=(\overset{h}{\underset{i=1}{\oplus}} L_i) \oplus \mathrm{grad}(HN(W)).$$

We have constructed the Harder-Narasimhan filtration with $w_i>0$ in
\cite{YZ}. If $h<g$, then $\mathrm{deg}(W)=0$ by the assumption \ref{assumption}. Using lemma \ref{directsum}, we get

$$0=\frac{\mathrm{deg}(W)}{\mathrm{deg}(\mathcal{L})}=\frac{\overset{g-h}{\underset{i=1}{\sum}}
\mu_i(W)}{\mathrm{deg}(\mathcal{L})}=\frac{\overset{g-h}{\underset{i=1}{\sum}}
\mu_{j_i}(f_*(\omega_{S/C}))}{\mathrm{deg}(\mathcal{L})}=\overset{g-h}{\underset{i=1}{\sum}}
w_{j_i}>0.$$

It is contradiction! Thus we have
$\mathrm{grad}(HN(f_*(\omega_{S/C})))=\overset{g}{\underset{i=1}{\bigoplus}}
L_i$ and $\lambda_i=w_i$.
\end{proof}

\paragraph{\textbf{Hyperelliptic loci}}It has been shown in \cite{EKZ11} that the ''stairs''
square tiled surface $S(N)$ satisfies the assumption
\ref{assumption} and belongs to the hyperelliptic
 connected component $\overline{\Omega\mathcal{M}}^{hyp}_g(2g-2)$, for $N=2g-1$
 or
   $\overline{\Omega\mathcal{M}}^{hyp}_g(g-1,g-1)$, for $N=2g$ .

\begin{remark}
  The  Proposition \ref{nonva} also implies that
 the Lyapunov spetrum of the Hodge bundles over the corresponding
 arithmetic  Teichm\"{u}ller curves  is
$$\Lambda Spec=\{
 \begin{array}{cc}
\frac{1}{N},\frac{3}{N},\frac{5}{N},...,\frac{N}{N}& N=2g-1 \\
\frac{2}{N},\frac{4}{N},\frac{6}{N},...,\frac{N}{N}& N=2g.
   \end{array}
$$
Which has been shown in \cite{EKZ11} by using the fact $S(N)$ is
quotient of $M_N(N-1,1,N-1,1)$ (resp. $M_{2N}(2N-1,1,N,N)$) for $N$
is even (resp. odd).
\end{remark}
\paragraph{\textbf{Prym varieties}} McMullen, use Prym eigenforms, has constructed infinitely many
primitive Teichm\"{u}ller curves for $g=2,3$ and $4$ (\cite{Mc06a}).
Let $W_{D}(6)$ be the Prym Teichm\"{u}ller curves in
$\Omega\mathcal{M}_4$. It has VHS decomposition:
$$R^1f_*\mathbb{C}=(\mathbb{L}_1\oplus \mathbb{L}_2)\oplus \mathbb{M}.$$
So it map to curves $W^X_D$ in the Hilbert modular surface
$X_D=\mathbb{H}^2/SL(\mathcal{O}_D\oplus\mathcal{O}_D^{\vee})$.
\begin{remark}
The proposition \ref{pa} tells us that the number
$\mathrm{deg}(\mathbb{L}^{1,0}_2)/\mathrm{deg}(\mathcal{L})$ equals one of the numbers
$\{\frac{4}{7},\frac{2}{7},\frac{1}{7}\}$. In fact it has been shown
that $W^X_D$ is the vanishing locus of a modular form of weight
$(2,14)$, so $\mathrm{deg}(\mathbb{L}^{1,0}_2)/\mathrm{deg}(\mathcal{L})$ is
$\frac{1}{7}$. (\cite{Mo13}\cite{We14})
\end{remark}

\end{document}